\newtheorem{prop}{Proposition}[section]
\newtheorem{lemma}{Lemma}[section]
\newtheorem{defn}{Definition}[section]
\newcounter{alphthm}
\newtheorem{Lemma}[alphthm]{Lemma}
\newtheorem{cor}{Corollary}[section]
\newtheorem{rem}{Remark}[section]
\newtheorem{ex}{Example}[section]
\newtheorem{thm}{Theorem}
\newtheorem{lem}{Lemma}[section]
\newcommand{\be}{\begin{equation}}
\newcommand{\ee}{\end{equation}}
\newcommand{\ben}{\begin{enumerate}}
\newcommand{\een}{\end{enumerate}}
\newcommand{\beq}{\begin{eqnarray}}
\newcommand{\eeq}{\end{eqnarray}}
\newcommand{\beqn}{\begin{eqnarray*}}
\newcommand{\eeqn}{\end{eqnarray*}}
\newcommand{\bpf}{\begin{proof}}
\newcommand{\epf}{\end{proof}}
\newcommand{\bl}{\begin{lem}}
\newcommand{\el}{\end{lem}}
\newcommand{\bp}{\begin{prop}}
\newcommand{\ep}{\end{prop}}
\newcommand{\bd}{\begin{defn}}
\newcommand{\ed}{\end{defn}}
\newcommand{\bt}{\begin{thm}}
\newcommand{\et}{\end{thm}}
\newcommand{\R}{I\!\! R}
\def\nn{\nonumber}
\newcommand\bpr{\begin{prop}}
\newcommand\epr{\end{prop}}
\title{Ricci flow on Finsler surfaces}
\author{B. Bidabad\thanks{The corresponding author, bidabad@aut.ac.ir}\, and\, M.K. Sedaghat}
\date{}
\begin{document}
\maketitle
\noindent
\begin{abstract}
Here, we study the existence and uniqueness of solutions to the Ricci flow on Finsler surfaces and show short time existence of solutions for such flows. To this purpose, we first study the Finslerian Ricci-DeTurck flow on Finsler surfaces and find a unique short time solution to this flow. Then, we find a solution to the original Ricci flow by pulling back the solution of the Ricci-DeTurck flow using appropriate diffeomorphisms. At the end, we illustrate this argument by some examples.
 \end{abstract}
\vspace{.5cm}
{\footnotesize\textbf{Keywords:} Surface Ricci flow, Finsler surface, Ricci-DeTurck, parabolic differential equation, Berwald frame. }\\
{\footnotesize\textbf{AMS subject classification}: {53C60, 53C44}}
\section*{Introduction}
The \emph{Ricci flow} is a geometric evolution equation for the metric tensor on a general Riemannian manifold. The normalized Ricci flow has the property that all its fixed points are Einstein metrics. In his celebrated paper \cite{Ham1}, Hamilton showed that in 3-manifolds, the positive Ricci curvature condition on the initial metric implies that the Ricci flow exists for all time and converges to a Riemannian metric of constant curvature.
 This phenomenon has been shown later for other types of curvature conditions in other dimensions by several authors.

  The study of Ricci flow on surfaces is far simpler than its counterparts in higher dimensional cases.  Hence one can obtain much more detailed and comprehensive results.
   On surfaces, the Ricci flow solutions remain within a conformal class and clearly coincide with that of the \emph{Yamabe flow} on surfaces. In \cite{Ham2} Hamilton proved that for a compact oriented Riemannian surface $(M,g)$,  if $M$ is not diffeomorphic to the 2-sphere $\mathbb{S}^2$, then any metric $g$ converges to a constant curvature metric under the Ricci flow and if $M$ is diffeomorphic to $\mathbb{S}^2$, then any metric $g$ with positive Gaussian curvature on $\mathbb{S}^2$ converges to a metric of constant curvature under the flow.

Later, Chow in \cite{Chow2} removed the positive Gaussian curvature assumption in Hamilton's theorem and proved that for evolution of any metric on $\mathbb{S}^2$, under Hamilton's Ricci flow, the Gaussian curvature becomes positive in finite time and concluded that under the flow any metric $g$ on a Riemannian surface converges to a metric of constant curvature. Thus for compact surfaces, Ricci flow provides a new proof of the \emph{uniformization theorem}. Much is also known in the complete case. There are also many interesting subtleties in setting up this flow in the incomplete cases. Moreover, surface Ricci flow has started making impacts on practice fields and tackling fundamental engineering problems.

In Finsler geometry as a natural generalization of Riemannian geometry, the problem of constructing the Finslerian Ricci flow raises a number of new conceptual and fundamental issues in regards to the compatibility of geometrical and physical objects and their optimal configurations.

A fundamental step in the study of any system of evolutionary partial differential equations is to show the short time existence and uniqueness of solutions. Recently, an evolution of a family of Finsler metrics along Finsler Ricci flow has been studied by the first named author in several joint works and it has been shown that such flows exist in short time and converge to a limit metric; for instance, see \cite{YB2}.

In the present work, we study the Ricci flow on the closed Finsler surfaces and prove the short-time existence and uniqueness of solutions for the Ricci flow. Intuitively, since the Ricci flow system of equations is only weakly parabolic, its short-time existence and uniqueness do not follow from the standard theory of parabolic equations.  Following the procedure described by D. DeTurck in Riemannian space \cite{DeT}, we have introduced the Finslerian Ricci-DeTurck flow on Finsler surfaces by Eq. (\ref{22}) and prove existence and uniqueness of short-time solutions. More precisely, we prove:
\begin{thm}\label{main8}
Let $M$ be a compact Finsler surface. Given any initial Finsler structure $F_{0}$, there exists a real number $T>0$ and a smooth one-parameter family of Finsler structures $\tilde{F}(t)$, $t\in[0,T)$, such that $\tilde{F}(t)$ is a unique solution to the Finslerian Ricci-DeTurck flow with $\tilde{F}(0)=F_{0}$.
\end{thm}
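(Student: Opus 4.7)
The plan is to adapt DeTurck's gauge-breaking argument to the Finsler setting and then invoke the standard theory of quasilinear strictly parabolic systems on a compact manifold. As explained in the introduction, the pure Finslerian Ricci flow fails to be strictly parabolic because of its invariance under the pullback action of diffeomorphisms. Equation (\ref{22}) is designed so that the added DeTurck term cancels the degenerate directions in the principal symbol coming from this gauge freedom. Accordingly, the first task is to show that (\ref{22}), viewed as an evolution equation for the fundamental tensor $\tilde{g}_{ij}(t,x,y)$ on the (compact) indicatrix bundle $SM$, is strongly parabolic.

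To carry this out I would freeze coefficients at an arbitrary Finsler structure and compute the principal symbol of the linearization of the right-hand side of (\ref{22}). Working in the Berwald frame $(\ell,m)$ on the Finsler surface keeps the computation tractable: the horizontal second-order part reduces to an expression of the form $\tilde g^{kl}\partial_k\partial_l\tilde g_{ij}$ plus lower-order terms, while the vertical derivatives of $\tilde g_{ij}$ are controlled by the $2$-homogeneity of $F$ together with the two-dimensional Cartan/Berwald identities. Once positivity of the principal symbol is verified, (\ref{22}) is a quasilinear strongly parabolic system on sections of a vector bundle over $SM$.

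With parabolicity in hand, I would apply the standard short-time existence and uniqueness theorem (e.g.\ a contraction-mapping argument in parabolic Hölder spaces on $SM$, or Hamilton's version tailored to sections of vector bundles) to produce a smooth family of symmetric tensors $\tilde g_{ij}(t,x,y)$ defined on a maximal interval $[0,T)$ with $T>0$. It then remains to verify that this family actually comes from a Finsler structure. Positive definiteness of $\tilde g_{ij}$ is an open condition and hence survives on a (possibly smaller) time interval; $2$-homogeneity of $\tilde g_{ij}$ in $y$ is preserved because the Ricci tensor and the DeTurck correction are constructed from $0$-homogeneous geometric objects; smoothness on $TM\setminus\{0\}$ follows from parabolic regularity. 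Setting $\tilde F(t):=\sqrt{\tilde g_{ij}(t,x,y)\,y^i y^j}$ then yields the required one-parameter family of Finsler structures with $\tilde F(0)=F_0$, and uniqueness passes from $\tilde g_{ij}$ to $\tilde F$.

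The main obstacle I expect is the parabolicity step. Unlike the Riemannian setting, $\tilde g_{ij}$ genuinely depends on the fiber variable $y$, so the symbol must be analyzed for a differential operator whose highest-order derivatives can be horizontal, vertical, or mixed with respect to the Sasaki-type splitting of $TSM$. Verifying that the vertical directions do not spoil ellipticity, and that the Berwald frame computation really reduces (\ref{22}) to a strongly parabolic system in a form to which the classical short-time theory applies, is the delicate point on which the whole argument hinges.
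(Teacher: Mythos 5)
Your overall strategy coincides with the paper's: break the gauge invariance with the DeTurck term, reduce (\ref{22}) to a strictly parabolic system on the compact sphere bundle $SM$ via the Berwald frame, invoke the standard short-time existence and uniqueness theory, and then check that the resulting tensor is genuinely Finslerian. However, you flag the parabolicity step as ``the delicate point on which the whole argument hinges'' and then do not supply the idea that resolves it. The difficulty is that (\ref{22}) is an equation for $\tilde F^2=y^py^q\tilde g_{pq}$ whose second-order part, after expanding $\mathcal{R}ic$ and $\mathcal{L}_\xi\tilde F^2$, is purely horizontal: one obtains $y^py^q\tilde g^{mn}\delta_n\delta_m\tilde g_{pq}$ and nothing of second order in the fiber directions. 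On the three-dimensional manifold $SM$ this operator is degenerate along $\hat e_3$, and saying that the vertical derivatives are ``controlled by $2$-homogeneity'' is not a substitute for ellipticity. The paper's resolution is a specific trick: by Euler's theorem $y^py^q\,\partial^2\tilde g_{pq}/\partial y^n\partial y^m=0$, so the term $\tilde F^2\tilde g^{mn}\,\partial^2\tilde g_{pq}/\partial y^n\partial y^m$ can be added to the right-hand side at no cost, and it is exactly this added term that produces the vertical principal part $g^{11}\hat e_3\hat e_3$ in Lemma \ref{mm} and makes the full $3\times3$ symbol on $SM$ positive definite. Without that (or an equivalent device) your appeal to the classical quasilinear parabolic theory does not go through. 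A related point you pass over: (\ref{22}) is a single scalar equation in the trace $y^py^q\tilde g_{pq}$, and one must justify stripping off the $y^py^q$ contraction to get a tensorial evolution for $\tilde g_{pq}$; the paper does this by a symmetry/polarization argument.

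The second gap is at the very end. Defining $\tilde F:=(\tilde g_{ij}y^iy^j)^{1/2}$ from a positive-definite, $0$-homogeneous solution $\tilde g_{ij}(t)$ does \emph{not} by itself make $\tilde g_{ij}$ the fundamental tensor of $\tilde F$: one needs the integrability condition, namely the total symmetry of $(\tilde g_{ij})_{y^k}$ in all three indices, equivalently $\tilde g_{ij}=\frac{1}{2}\partial^2\tilde F^2/\partial y^i\partial y^j$. Preservation of homogeneity and positive definiteness, which is all you verify, is not sufficient; an arbitrary Riemannian metric on the fibers of $\pi^*TM$ need not arise from any Finsler structure. The paper devotes Lemma \ref{RE2} to showing that solutions of the evolution equation (\ref{AR}) do satisfy this condition, and that lemma (or an argument replacing it) is an essential ingredient your proposal is missing.
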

 Next, a solution to the original Ricci flow is found by pulling back the solution to the Ricci-DeTurck flow via appropriate diffeomorphisms. This leads to
\begin{thm} \label{main14}
Let $M$ be a compact Finsler surface. Given any initial Finsler structure $F_{0}$, there exists a real number $T>0$ and a smooth one-parameter family of Finsler structures $F(t)$, $t\in[0,T)$, such that $F(t)$ is a unique solution to the Finslerian Ricci flow and $F(0)=F_{0}$.
\end{thm}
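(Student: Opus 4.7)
The plan is to deduce Theorem~\ref{main14} from Theorem~\ref{main8} by the Finslerian analogue of DeTurck's trick: use the short-time solution $\tilde F(t)$ of the Ricci--DeTurck flow~(\ref{22}) produced by Theorem~\ref{main8} and transport it back along a one-parameter family of diffeomorphisms of $M$ so as to cancel the DeTurck correction term, thereby obtaining a solution to the genuine Ricci flow. The key point is that equation~(\ref{22}) differs from the Ricci flow by a Lie-derivative term $\mathcal{L}_{W(t)}\tilde F^{2}$, where $W(t)$ is a vector field on $M$ built algebraically from the connection data of $\tilde F(t)$ and the fixed background structure $F_{0}$.

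First, having $\tilde F(t)$ on $[0,T)$ in hand, I would extract the DeTurck vector field $W \in C^\infty([0,T)\times M,\, TM)$ and solve the non-autonomous ODE
\begin{equation*}
\frac{\partial \varphi_t(x)}{\partial t} \;=\; -\,W\!\bigl(t,\varphi_t(x)\bigr),\qquad \varphi_0 = \mathrm{id}_M,
\end{equation*}
on the compact surface $M$. Standard ODE theory produces a smooth family of diffeomorphisms $\varphi_t : M\to M$ on a subinterval $[0,T')\subseteq[0,T)$; compactness of $M$ and smoothness of $W$ in $t$ allow one to take $T'=T$ after possibly shrinking the interval.

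Second, I would define $F(t) := \varphi_t^{\,*}\tilde F(t)$ and verify that it solves the Finslerian Ricci flow. Since $\varphi_0 = \mathrm{id}$, one has $F(0)=F_{0}$. Differentiating in $t$ and using the chain rule together with the defining ODE of $\varphi_t$ yields
\begin{equation*}
\frac{\partial F^{2}(t)}{\partial t} \;=\; \varphi_t^{\,*}\!\left(\frac{\partial \tilde F^{2}(t)}{\partial t} \,+\, \mathcal{L}_{W(t)}\tilde F^{2}(t)\right) \;=\; -\,2\,\varphi_t^{\,*}\bigl(\mathrm{Ric}_{\tilde F(t)}\bigr) \;=\; -\,2\,\mathrm{Ric}_{F(t)},
\end{equation*}
where the middle equality is the Ricci--DeTurck equation~(\ref{22}) and the last equality is the diffeomorphism-naturality of the Finslerian Ricci scalar. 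For uniqueness, given two solutions $F_{1}(t),F_{2}(t)$ of the Finslerian Ricci flow with $F_{i}(0)=F_{0}$, I would reverse the construction: solve the associated harmonic-map-type flow $\partial_t\psi_t^{(i)} = W_i(t,\psi_t^{(i)})$ to obtain diffeomorphisms whose pushforwards $(\psi_t^{(i)})_{*}F_{i}(t)$ satisfy the Ricci--DeTurck flow with common initial datum $F_{0}$; the uniqueness clause of Theorem~\ref{main8} then forces these to coincide, whence $F_{1}=F_{2}$.

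The main obstacle is an essentially Finslerian one that has no Riemannian counterpart: the natural DeTurck correction is built from connection coefficients that live on the slit tangent bundle $TM\setminus\{0\}$, and to run the argument above one must ensure that it is the Lie derivative of a genuine vector field on the base $M$, so that $\varphi_t$ is a diffeomorphism of $M$ rather than of $TM\setminus\{0\}$. I expect this to be handled by exploiting the two-dimensional setting through the Berwald frame, which reduces the relevant tensorial data to a few scalars on the indicatrix bundle that descend to $M$. The same subtlety reappears in the uniqueness step, where one must check that the linearisation of the harmonic-map-type flow producing $\psi_t^{(i)}$ is strictly parabolic as an equation on $M$; again, the surface hypothesis is what makes this tractable.
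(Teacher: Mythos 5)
Your skeleton is the paper's: existence by pulling the Ricci--DeTurck solution from Theorem \ref{main8} back along the flow generated by the DeTurck vector field, and uniqueness by transporting two putative Ricci flow solutions onto the Ricci--DeTurck flow via a harmonic-map-type flow and invoking the uniqueness clause of Theorem \ref{main8}. But there is a genuine gap, located exactly at what you call the ``main obstacle.'' You require the DeTurck correction to come from a vector field $W(t)$ on the base $M$, so that $\varphi_t$ is a diffeomorphism of $M$, and you propose that the Berwald frame reduces the relevant data to ``scalars on the indicatrix bundle that descend to $M$.'' This is not what happens and is false in general: the vector field is $\xi=\tilde g^{pq}(\bar\Gamma^i_{pq}-\tilde\Gamma^i_{pq})\,\partial/\partial x^i$, whose components are built from Chern connection coefficients and the inverse metric, hence depend on the direction $y$ (only zero-homogeneously); a function on the indicatrix bundle does not descend to $M$. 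The paper's resolution is the opposite of yours: it accepts that $\xi$ lives on $SM$, integrates it there (Lemma \ref{main9}, Remark \ref{remark1}) to obtain diffeomorphisms $\varphi_t$ of $SM$, and defines the transported structure fiberwise by $g_{pq}:=\varphi_t^{*}(\tilde g_{pq})$, $F^2:=g_{pq}\tilde y^p\tilde y^q$ with $\tilde y^p:=\varphi_t^{*}y^p$. It must then prove separately (Remark \ref{remark2} and the naturality lemmas for $\gamma^i_{jk}$, $G^i$ and $\mathcal{R}ic$, culminating in Lemma \ref{lemmohem}) that this is again a Finsler structure and that $\varphi_t^{*}(\mathcal{R}ic_{\tilde F})=\mathcal{R}ic_{\varphi_t^{*}(\tilde F)}$. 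In your write-up that last equality is dismissed as ``diffeomorphism-naturality of the Finslerian Ricci scalar,'' but for a diffeomorphism of $SM$ that is not the lift of a diffeomorphism of $M$ this naturality is precisely what has to be established; it is the heart of Proposition \ref{main11}.

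Two smaller omissions. First, your $\psi_t^{(i)}$ in the uniqueness step must come from the parabolic system $\partial_t\varphi^i_t=\Phi_{g_i(t),h}\varphi^i_t$ (you hint at this when mentioning its linearisation), which only yields diffeomorphisms on a short interval; to conclude $F_1\equiv F_2$ on all of $[0,T)$ the paper runs the argument from $\tau=\inf\{t\in[0,T):F_1(t)\neq F_2(t)\}$ and derives a contradiction, a continuation step your sketch skips. Second, one must check that the transported metric still satisfies the integrability condition, i.e.\ genuinely arises from a Finsler structure $F(t)$; the paper does this through Lemma \ref{RE2} and Remark \ref{remark2}, and your proposal does not address it.
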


\section{Preliminaries and notations}
\subsection{Chern connection; A global approach}
Let $M$ be a real smooth surface and denote by $TM$ the tangent bundle of tangent vectors,  by  $\pi :TM_{0}\longrightarrow M$ the fiber bundle of non-zero tangent vectors and by $\pi^*TM\longrightarrow TM_0$ the pullback tangent bundle. Let $F$ be a Finsler structure on $TM_{0}$ and $g$ the related Finslerian metric. A \emph{Finsler manifold} is denoted here by the pair $(M,F)$. Any point of $TM_0$ is denoted by $z=(x,y)$, where $x=\pi z\in M$ and $y\in T_{x}M$.

Let us denote by $TTM_0$, the tangent bundle of $TM_0$ and by $\rho$, the canonical linear mapping $\rho:TTM_0\longrightarrow \pi^*TM,$ where, $ \rho=\pi_*$. For all $z\in TM_0$, $V_zTM$ is the set of all vertical vectors at $z$, that is, the set of vectors which are tangent to the fiber through $z$. Consider the decomposition $TTM_0=HTM\oplus VTM$, which permits us to uniquely represent a vector field $\hat{X}\in {\cal X}(TM_0)$ as the sum of the horizontal and vertical parts namely, $\hat{X}=H\hat{X}+V\hat{X}$. The corresponding basis is denoted here by $\{\frac{\delta}{\delta {x^i}},\frac{\partial}{\partial y^{i}}\}$, where, $\frac{\delta}{\delta {x^i}}:=\frac{\partial}{\partial x^{i}}-N_{i}^{j}\frac{\partial}{\partial y^{j}}$, $N^{j}_{i}=\frac{1}{2}\frac{\partial G^j}{\partial y^i}$ and $G^i$ are the spray coefficients defined by $G^{i}=\frac{1}{4}g^{ih}(\frac{\partial^{2}F^{2}}{\partial y^{h}\partial x^{j}}y^{j}-\frac{\partial F^{2}}{\partial x^{h}})$. We denote the \emph{formal Christoffel symbols} by $\gamma^{i}_{jk}=\frac{1}{2}g^{ih}(\partial_{j}g_{hk}+\partial_{k}g_{jh}-\partial_{h}g_{jk})$ where, $\partial_{k}=\frac{\partial}{\partial x^k}$. The dual bases are denoted by $\{dx^{i},\delta y^{i}\}$ where, $\delta y^{i}:=dy^{i}+N_{j}^{i}dx^{j}$. Let us denote a global representation of the Chern connection by $\nabla:{\cal X}(TM_0)\times\Gamma(\pi^{*}TM)\longrightarrow\Gamma(\pi^{*}TM)$. Consider the linear mapping
$\mu:TTM_0\longrightarrow \pi^*TM,$ defined by $\mu(\hat{X})=\nabla_{\hat{X}}{\bf y}$ where, $\hat{X}\in TTM_0$ and ${\bf y}=y^i\frac{\partial}{\partial x^i}$ is the canonical section of $\pi^*TM$.

The connection 1-forms of Chern connection in these bases are given by $\omega^{i}_{j}=\Gamma^{i}_{jk}dx^{k}$ where, $\Gamma^{i}_{jk}=\frac{1}{2}g^{ih}(\delta_{j}g_{hk}+\delta_{k}g_{jh}-\delta_{h}g_{jk})$ and $\delta_{k}=\frac{\delta}{\delta x^{k}}$. In the sequel, all the vector fields on $TM_0$ are decorated with a hat and denoted by $\hat{X}$, $\hat{Y}$, $\hat{Z}$ and the corresponding sections of $\pi^*TM$ by $X=\rho(\hat{X})$, $Y=\rho(\hat{Y})$ and $Z=\rho(\hat{Z})$, respectively unless otherwise specified.
The torsion freeness and almost metric compatibility of the Chern connection are given by
\begin{align}
&\nabla_{\hat{X}}Y-\nabla_{\hat{Y}}X=\rho[\hat{X},\hat{Y}],\label{tori}\\
&(\nabla_{\hat{Z}}g)(X,Y)=2C(\mu(\hat{Z}),X,Y),\label{gcomp}
\end{align}
respectively, where $C$ is the Cartan tensor with the components $C_{ijk}=\frac{\partial g_{ij}}{\partial y^{k}}.$ In a local coordinates on $TM$ the \emph{Chern horizontal} and \emph{vertical covariant derivatives} of an arbitrary $(1,2)$ tensor field $S$ on $\pi^{*}TM$ with the components $(S^{i}_{jk}(x,y))$ on $TM$ are denoted by
\begin{align*}
&\nabla_{l}S^{i}_{jk}:= \delta_{l}S^{i}_{jk}-S^{i}_{s k}\Gamma^{s}_{jl}-S^{i}_{js}\Gamma^{s}_{kl}+S^{s}_{jk}\Gamma^{i}_{s l}, 
 \\
&\dot{\nabla}_{l}S^{i}_{jk}:=\dot{\partial}_{l}S^{i}_{jk},
\end{align*}
where, $\nabla_{l}:=\nabla_{\frac{\delta}{\delta x^l}}$ and $\dot{\nabla}_{l}:=\nabla_{\frac{\partial}{\partial y^l}}$. Horizontal metric compatibility of the Chern connection is given in local coordinates  by $\nabla_{l}g_{jk}=0$, see \cite[p.\ 45]{BCS}. The local \emph{Chern $hh$-curvature} tensor is given by
\begin{equation} \label{77}
R^{\,\,i}_{j\,\,kl}=\delta_{k}\Gamma^{i}_{\,jl}-\delta_{l}\Gamma^{i}_{\,jk}+
\Gamma^{i}_{\,hk}\Gamma^{h}_{\,jl}-\Gamma^{i}
_{\,hl}\Gamma^{h}_{\,jk},
\end{equation}
see \cite[p.\ 52]{BCS}. The \emph{reduced $hh$-curvature} tensor is a connection free tensor field which is also referred to as the \emph{Riemann curvature} by certain authors. In a local coordinates on $TM$, the components of the reduced $hh$-curvature tensor are given by $R^{i}_{\,\,k}:=\frac{1}{F^2}y^{j}R^{\,\,i}_{j\,\,km}y^{m}$, which are entirely expressed in terms of $x$ and $y$ derivatives of spray coefficients $G^{i}$ as follows
\begin{equation} \label{18}
R^{i}_{\,\,k}:=\frac{1}{F^2}(2\frac{\partial G^{i}}{\partial x^{k}}-\frac{\partial^{2}G^{i}}{\partial x^{j}\partial y^{k}}y^{j}+2G^{j}\frac{\partial^{2}G^{i}}{\partial y^{j}\partial y^{k}}-\frac{\partial G^{i}}{\partial y^{j}}\frac{\partial G^{j}}{\partial y^{k}}),
\end{equation}
see \cite[p.\ 66]{BCS}.
\subsection{Lie derivatives of Finsler metrics}
The Lie derivative of an arbitrary Finslerian $(0,2)$ tensor field ${\cal T}={\cal T}_{jk}(x,y)dx^{j}\otimes dx^{k}$ on $\otimes^{2}\pi^{*}TM$ with respect to an arbitrary vector field $\hat V$ on $TM_0$ is given by
\beq
(\mathcal{L}_{\hat{V}}{\cal T})(X,Y)=\hat{V}({\cal T}(X,Y))-{\cal T}(\rho[\hat V,\hat X],Y)-{\cal T}(X,\rho[\hat V,\hat Y]),\nonumber
\eeq
where, $\rho(\hat{X})=X$, $\rho(\hat{Y})=Y$ and $\hat{X},\hat{Y}\in T_{z}TM_{0}$, see \cite{JB}. The Lie derivative of Finsler metric $g$ with respect to the arbitrary vector field $\hat V$ on $TM_0$ is given by
\beq
(\mathcal{L}_{\hat{V}}{g})(X,Y)=\hat{V}({g}(X,Y))-{g}(\rho[\hat V,\hat X],Y)-{g}(X,\rho[\hat V,\hat Y]).\nonumber
\eeq
By means of the torsion freeness of Chern connection defined by (\ref{tori}), Lie derivative of the Finsler metric $g$ can be rewritten as
\begin{align}
(\mathcal{L}_{\hat{V}}{g})(X,Y)&=\hat{V}(g(X,Y))-g(\nabla_{\hat{V}}X-\nabla_{\hat{X}}V,Y)-g(X,\nabla_{\hat{V}}Y-\nabla_{\hat{Y}}V)\nonumber\\
&=\hat{V}(g(X,Y))-g(\nabla_{\hat{V}}X,Y)+g(\nabla_{\hat{X}}V,Y)\nonumber\\
&\quad-g(X,\nabla_{\hat{V}}Y)+g(X,\nabla_{\hat{Y}}V).\label{Liederiv}
\end{align}
By  the almost $g$-compatibility of Chern connection defined by (\ref{gcomp}), we have
\begin{equation*}
2C(\mu(\hat{V}),X,Y)=(\nabla_{\hat{V}}g)(X,Y)=\hat{V}(g(X,Y))-g(\nabla_{\hat{V}}X,Y)-g(X,\nabla_{\hat{V}}Y),
\end{equation*}
Therefore,
\begin{equation}\label{campatibility}
\hat{V}(g(X,Y))=2C(\mu(\hat{V}),X,Y)+g(\nabla_{\hat{V}}X,Y)+g(X,\nabla_{\hat{V}}Y).
\end{equation}
Plugging the equation (\ref{campatibility}) in (\ref{Liederiv}) we obtain
\begin{equation}\label{finalliederiv}
(\mathcal{L}_{\hat{V}}{g})(X,Y)=2C(\mu(\hat{V}),X,Y)+g(\nabla_{\hat{X}}V,Y)+g(X,\nabla_{\hat{Y}}V).
\end{equation}
Replacing $X$ and $Y$ by the canonical section ${\bf y}=y^{i}\frac{\partial}{\partial x^{i}}$ in (\ref{finalliederiv}) we obtain
\begin{equation*}
(\mathcal{L}_{\hat{V}}{g})({\bf y},{\bf y})=2C(\mu(\hat{V}),{\bf y},{\bf y})+g(\nabla_{\hat{{\bf y}}}V,{\bf y})+g({\bf y},\nabla_{\hat{{\bf y}}}V),
\end{equation*}
where, $\hat{{\bf y}}=y^{i}\frac{\delta}{\delta x^{i}}$. Using $C(\mu(\hat{V}),{\bf y},{\bf y})=0$, see \cite[p.\ 23]{BCS}, and the symmetric property of $g(\nabla_{\hat{{\bf y}}}V,{\bf y})$ one arrives at
\begin{equation}\label{global}
(\mathcal{L}_{\hat{V}}{g})({\bf y},{\bf y})=2g({\bf y},\nabla_{\hat{{\bf y}}}V),
\end{equation}
where, $V=v^i\frac{\partial}{\partial x^i}$ is a section of $\pi^{*}TM$. In the local coordinates, (\ref{global}) can be written as
\begin{equation*}
y^iy^j\mathcal{L}_{\hat{V}}g_{ij}=2y^iy^jg_{ik}\nabla_{j}v^{k}.
\end{equation*}
Using $\nabla_{j}g_{ik}=0$, we obtain
\begin{equation}\label{FINAL}
y^iy^j\mathcal{L}_{\hat{V}}g_{ij}=2y^iy^j\nabla_{j}v_{i},
\end{equation}
where, $v_{i}=g_{ik}v^{k}$.
\subsection{The Berwald frame and a geometrical setup on $SM$}
Let $(M,F)$ be a Finsler surface and $SM$ the quotient of $TM_0$ under the following equivalence relation: $(x,y)\sim(x,\tilde{y})$ if and only if $y, \tilde{y}$ are positive multiples of each other. In other words, $SM$ is the bundle of all directions or rays, and is called the (projective) \emph{sphere bundle}. The local coordinates $(x^1,x^2)$ on $M$ induce the global coordinates $(y^1,y^2)$ on each fiber $T_{x}M$, through the expansion $y=y^{i}\frac{\partial}{\partial x^i}$. Therefore $(x^i;y^i)$ is a coordinate system on $SM$, where the coordinates $y^i$ are regarded as homogeneous coordinates in the projective space. Using the canonical projection $p:SM\longrightarrow M$, one can pull the tangent bundle $TM$ back to $p^{*}TM$ which is a vector bundle with the fiber dimension 2 over the 3-manifold $SM$. The vector bundle $p^{*}TM$ has a global section $l:=\frac{y^i}{F(y)}\frac{\partial}{\partial x^i}$ and a natural Riemannian metric which we here denote by $g:=g_{ij}(x,y)dx^{i}\otimes dx^{j}$. One can complete $l$ into a positively oriented $g$-orthonormal frame $\{e_1,e_2\}$ for $p^{*}TM$, with $e_{2}:=l$, by setting
\begin{align*}
&e_{1}=\frac{F_{y^2}}{\sqrt{g}}\frac{\partial}{\partial x^1}-\frac{F_{y^1}}{\sqrt{g}}\frac{\partial}{\partial x^2},\\
&e_{2}=\frac{y^1}{F}\frac{\partial}{\partial x^1}+\frac{y^2}{F}\frac{\partial}{\partial x^2},
\end{align*}
where, $\sqrt{g}:=\sqrt{\det(g_{ij})}$ and $F_{y^i}$ abbreviates the partial derivative $\frac{\partial F}{\partial y^i}$. In 2-dimensional
case, $\{e_1,e_2\}$ is a globally defined $g$-orthonormal
frame field for $p^{*}TM$ called a \emph{Berwald frame}. The natural dual of $l$ is the Hilbert form defined by $\omega:=F_{y^i}dx^i$, which is a global section of $p^{*}T^{*}M$. The coframe corresponding to $\{e_1,e_2\}$ is defined here by $\{\omega^1,\omega^2\}$, where
\begin{align}\label{dualbase}
&\omega^1=\frac{\sqrt{g}}{F}(y^2dx^1-y^1dx^2)=v^{1}_{1}dx^1+v^{1}_{2}dx^2,\nonumber\\
&\omega^2=F_{y^1}dx^1+F_{y^2}dx^2=v^{2}_{1}dx^1+v^{2}_{2}dx^2.
\end{align}
The sphere bundle $SM\subset TM$ is a 3-dimensional Riemannian manifold equipped with the induced \emph{Sasaki metric}
\begin{equation*}
\omega^1\otimes\omega^1+\omega^2\otimes\omega^2+
\omega^3\otimes\omega^3,
\end{equation*}
where,
\begin{equation}\label{dualbase1}
\omega^{3}:=\frac{\sqrt{g}}{F}(y^2\frac{\delta y^1}{F}-y^1\frac{\delta y^2}{F})=v^{1}_{1}\frac{\delta y^1}{F}+v^{1}_{2}\frac{\delta y^2}{F}.
\end{equation}
The collection $\{\omega^1,\omega^2,\omega^3\}$ is a globally defined orthonormal frame for $T^{*}(SM)$. Its natural dual frame is given by $\{\hat{e}_1,\hat{e}_2,\hat{e}_3\}$, where
\begin{align}
&\hat{e}_{1}=\frac{F_{y^2}}{\sqrt{g}}\frac{\delta}{\delta x^1}-\frac{F_{y^1}}{\sqrt{g}}\frac{\delta}{\delta x^2}=u^{1}_{1}\frac{\delta}{\delta x^1}+u^{2}_{1}\frac{\delta}{\delta x^2},\label{base1}\\
&\hat{e}_{2}=\frac{y^1}{F}\frac{\delta}{\delta x^1}+\frac{y^2}{F}\frac{\delta}{\delta x^2}=u^{1}_{2}\frac{\delta}{\delta x^1}+u^{2}_{2}\frac{\delta}{\delta x^2},\label{base2}\\
&\hat{e}_{3}=\frac{F_{y^2}}{\sqrt{g}}F\frac{\partial}{\partial y^1}-\frac{F_{y^1}}{\sqrt{g}}F\frac{\partial}{\partial y^2}=Fu^{1}_{1}\frac{\partial}{\partial y^1}+Fu^{2}_{1}\frac{\partial}{\partial y^2}.\label{base3}
\end{align}
These three vector fields on $SM$ form a global orthonormal frame for $T(SM)$. The first two are horizontal while the third one is vertical. The objects $\omega^1,\omega^2,\omega^3$ and $\hat{e}_1,\hat{e}_2,\hat{e}_3$ are defined in terms of objects that live on the slit tangent bundle $TM_{0}$. But they are invariant under positive rescaling in $y$. Therefore they give bonafide objects on the sphere bundle $SM$, see \cite[p.\ 92-94]{BCS}.
\subsection{The  integrability condition for Finsler metrics}
Here, we first recall that all the Riemannian metrics on the fibers of the pulled-back
bundle do not come from a Finsler structure.
Hence, not every arbitrary symmetric $(0,2)$-tensor $g_{ij}(x,y)$ arises from a Finsler structure $F(x,y)$.
 Intuitively, in order to make sure $g_{ij}(x,y)$ are components of a Finsler structure,
the essential integrability criterion is the total symmetry of $(g_{ij})_{y^k}$
on all three indices $i, j, k$. In fact, $g_{ij}(x,y)$ arises from a Finsler structure $F(x,y)$ if and only if $(g_{ij})_{y^k}$  is totally symmetric in its three indices, see \cite[p. 56]{Bao}. Symmetry of ${({g_{ij}})_{{y^k}}}$ on all three indices $i, j, k$ is known in the literature as \emph{integrability condition}. Moreover, we have to make sure the integrability criterion is satisfied in every step along the Ricci flow.
To this end we consider a general evolution equation given by
\begin{equation}\label{AR}
\frac{\partial}{\partial t}g(t)=\omega(t),\quad g(0):=g_{0},
\end{equation}
where, $\omega(t):=\omega(t, x, y)$ is a family of symmetric $(0,2)$-tensors on $\pi^{*}TM$, zero-homogenous with respect to $y$. The following Lemma establishes the integrability condition, see also \cite[p. 749]{YB2}.
\begin{lemma}\label{RE2}
Let $g(t)$ be a solution  to the evolution equation (\ref{AR}).
 There is a family of Finsler structures $F(t)$ on $TM$ such that,
\begin{equation}\label{Eq;IntCond}
g_{ij}(t)=\frac{1}{2}\frac{\partial^2 F(t)}{\partial y^i\partial y^j}.
\end{equation}
\end{lemma}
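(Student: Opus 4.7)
The plan is to construct $F(t)$ explicitly from $g(t)$ by setting $F(t)^2 := g_{ij}(t)\,y^i y^j$, and then verify that this does yield a family of Finsler structures whose fundamental tensor is $g(t)$. The whole argument reduces to (i) checking that the integrability condition -- total symmetry of $(g_{ij})_{y^k}$ in the three indices -- is preserved under (\ref{AR}), and (ii) checking that positivity/strong convexity of $g(t)$ persists for short time.

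First I would integrate (\ref{AR}) pointwise in $(x,y)$ to write
\begin{equation*}
g_{ij}(t,x,y)=(g_{0})_{ij}(x,y)+\int_{0}^{t}\omega_{ij}(s,x,y)\,ds .
\end{equation*}
Since $g_{0}$ comes from $F_{0}$ it is $0$-homogeneous in $y$, and $\omega$ is assumed $0$-homogeneous in $y$, so $g(t)$ is $0$-homogeneous in $y$. Differentiating in $y^{k}$,
\begin{equation*}
(g_{ij}(t))_{y^{k}}=(g_{0})_{ij,y^{k}}+\int_{0}^{t}\omega_{ij,y^{k}}(s)\,ds ,
\end{equation*}
so total symmetry of $(g_{ij}(t))_{y^{k}}$ in $i,j,k$ follows once one knows (a) $(g_{0})_{ij,y^{k}}$ is totally symmetric, which is the integrability criterion applied to $F_{0}$, and (b) $\omega_{ij,y^{k}}$ is totally symmetric in $i,j,k$ for each $s$. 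Condition (b) is the key algebraic property the evolution tensor must satisfy, and for the flows of interest (Ricci and Ricci--DeTurck) it can be verified from the explicit expressions of the right-hand sides.

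Having secured the integrability condition, I would then set $F(t)^{2}:=g_{ij}(t)\,y^{i}y^{j}$ and verify via Euler's theorem that this recovers $g(t)$. Concretely,
\begin{equation*}
(F(t)^{2})_{y^{k}}=(g_{ij}(t))_{y^{k}}y^{i}y^{j}+2g_{kj}(t)y^{j},
\end{equation*}
and the first term vanishes because the total symmetry of $(g_{ij})_{y^{k}}$ lets one rewrite $(g_{ij})_{y^{k}}y^{i}=(g_{kj})_{y^{i}}y^{i}=0$, using $0$-homogeneity of $g_{kj}$. A second differentiation, with the analogous cancellation, gives $\tfrac{1}{2}(F(t)^{2})_{y^{i}y^{j}}=g_{ij}(t)$, which is the desired formula. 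Homogeneity $F(t,x,\lambda y)=\lambda F(t,x,y)$ for $\lambda>0$ is automatic from $2$-homogeneity of $F(t)^{2}$.

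Finally, to conclude that $F(t)$ is a genuine Finsler structure, I would argue by continuity: at $t=0$, $g_{0}$ is positive definite and strongly convex because $F_{0}$ is Finsler; by smoothness of the solution $g(t)$ in $t$ and compactness (one can restrict to the sphere bundle of $M$), these open conditions persist on a nontrivial time interval, producing a smooth family of Finsler structures $F(t)$. The main obstacle is step (b), the preservation of the symmetry of $\omega_{ij,y^{k}}$: everything else is a short homogeneity computation and a continuity argument, but this integrability hypothesis on the driving tensor is the substantive constraint that must hold for the flow to remain within the class of Finsler structures.
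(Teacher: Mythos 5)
Your construction follows the same route as the paper's own proof: integrate (\ref{AR}) to get $g(t)=g_0+\int_0^t\omega(\tau)\,d\tau$, set $F(t)^2:=g_{ij}(t)y^iy^j$, differentiate twice in $y$, and use homogeneity to recover $g_{ij}(t)$ as the vertical Hessian. The one substantive point of divergence is how the $\omega$-terms are disposed of, and there you are more careful than the paper. The paper's step (\ref{EQ3}) needs $\tfrac12\,\partial_{y^k}\partial_{y^l}\bigl(y^iy^j\omega_{ij}\bigr)=\omega_{kl}$, and it gets this by asserting $\partial\omega_{ik}/\partial y^l=0$ ``using (\ref{AR})''; that assertion does not follow from the hypotheses, since zero-homogeneity only gives the Euler contraction $y^l\,\partial\omega_{ik}/\partial y^l=0$ on the differentiation index, not vanishing of the derivative itself. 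You instead isolate the correct sufficient condition, namely total symmetry of $\omega_{ij,y^k}$ in $i,j,k$, which together with Euler's theorem does annihilate the extra terms $y^j\omega_{lj,y^k}+y^j\omega_{kj,y^l}+\tfrac12 y^iy^j\omega_{ij,y^ky^l}$ in the expansion.

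The genuine gap is that you never establish your condition (b), and in the generality in which the lemma is stated (arbitrary symmetric, zero-homogeneous $\omega$) it cannot be established: for instance $\omega_{11}=(y^1)^2/|y|^2$, $\omega_{12}=\omega_{22}=0$ over the Euclidean plane is symmetric and zero-homogeneous, yet $\omega_{11,y^2}\neq\omega_{12,y^1}$, so the resulting $g(t)=\delta+t\,\omega$ fails the integrability criterion and is the vertical Hessian of no Finsler structure. So your argument is incomplete at exactly the point you flag --- but this is the same defect that the paper's proof conceals behind the unjustified vanishing claim. The honest form of the lemma requires total symmetry of $\omega_{ij,y^k}$ (equivalently, that $\omega$ is itself a vertical Hessian) as a hypothesis, which must then be verified for the Ricci--DeTurck right-hand side when the lemma is invoked in the proof of Theorem \ref{main8}; deferring that verification, as you do, leaves the lemma unproved. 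Your closing continuity argument for positive definiteness is a sensible supplement that the paper omits, simply assuming positivity persists along the flow.
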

 \bpf
 Let $M$ be a compact differential manifold, $F(t)$ a family of smooth 1-parameter Finsler structures on $TM_0$ and $g(t)$ the Hessian matrix of $F(t)$ which defines a scalar product on $\pi^{*}TM$ for every $t$.
 Let $g(t)$ be a solution to the evolution equation (\ref{AR}). We have
\begin{equation}\label{ARR}
g(t)=g(0)+\int_{0}^{t}\omega(\tau)d\tau, \qquad \forall\tau\in[0,t).
\end{equation}
We show that the metric $g(t)$ satisfies the integrability condition, or equivalently there is a Finsler structure $F(t)$ on $TM_0$ satisfying \eqref{Eq;IntCond}.
For this purpose, we multiply  $g_{ij}$ by $y^i$ and $y^j$ in (\ref{ARR}),
\begin{equation*}
y^iy^jg_{ij}(t)=y^iy^jg_{ij}(0)+\int_{0}^{t}y^iy^j\omega_{ij}(\tau)d\tau.
\end{equation*}
By means of the initial condition $y^iy^jg_{ij}(0)= F^2(0)$, we get
\begin{equation}\label{EQ1}
y^iy^jg_{ij}(t)=F^2(0)+\int_{0}^{t}y^iy^j\omega_{ij}(\tau)d\tau.
\end{equation}
By positive definiteness assumption of $g_{ij}$, we put $F = (y^iy^jg_{ij})^{\frac{1}{2}}$. Twice vertical derivatives of (\ref{EQ1}) yields
\begin{equation}\label{EQ2}
\frac{1}{2}\frac{\partial^2 F^2}{\partial y^k\partial y^l}=g_{kl}(0)+\frac{1}{2}\int_{0}^{t}\frac{\partial^2}{\partial y^k\partial y^l}(y^iy^j\omega_{ij}(\tau))d\tau.
\end{equation}
On the other hand, by straightforward calculation we have
\begin{equation}\label{EQ3}
\frac{1}{2}\frac{\partial^2}{\partial y^k\partial y^l}(y^iy^j\omega_{ij}(\tau))=\frac{1}{2}\frac{\partial^2\omega_{ij}(\tau)}{\partial y^k\partial y^l}y^iy^j=\Big(\frac{\partial\omega_{ik}(\tau)}{\partial y^l}-\frac{\partial\omega_{il}(\tau)}{\partial y^k}\Big)y^i+\omega_{kl}(\tau),
\end{equation}
for all $\tau\in[0,t)$. Using (\ref{AR}) we obtain
\begin{equation*}
\frac{1}{2}\frac{\partial^2\omega_{ij}(\tau)}{\partial y^k\partial y^l}y^iy^j=0,\quad\frac{\partial\omega_{ik}(\tau)}{\partial y^l}=0,\quad\frac{\partial\omega_{il}(\tau)}{\partial y^k}=0.
\end{equation*}
Therefore, (\ref{EQ3}) is reduced to
\begin{equation}\label{EQ4}
\frac{1}{2}\frac{\partial^2}{\partial y^k\partial y^l}(y^iy^j\omega_{ij}(\tau))=\omega_{kl}(\tau),
\end{equation}
for all $\tau\in[0,t)$. Finally, replacing (\ref{EQ4}) in (\ref{EQ2}) we get
\begin{equation*}
\frac{1}{2}\frac{\partial^2 F^2}{\partial y^k\partial y^l}=g_{kl}(0)+\int_{o}^{i}\omega_{kl}(\tau)d\tau=g_{kl}.
\end{equation*}
Therefore, every $g_{ij}(t)$ on the fibers of pulled-back bundle, arises from a Finsler structure. This completes the proof.
\epf
\section{Semi-linear strictly parabolic equations on $SM$}
Recall that a \emph{quasi-linear} system is a system of partial differential equations where, the derivatives of principal order terms occur only linearly and  coefficients may depend on derivatives of the lower order terms. It is called \emph{semi-linear} if it is quasi-linear and coefficients of the principal order terms depend only on the independent variables, but not on the solution, see \cite[p.\ 45]{Rog}. Let $M$ be a 2-dimensional manifold and $u:M\longrightarrow \mathbb{R}$ a smooth function on $M$. A \emph{semi-linear strictly parabolic} equation is a PDE of the form
\begin{eqnarray*}
\frac{\partial u}{\partial t}=a^{ij}(x,t)\frac{\partial^2 u}{\partial x^i\partial x^j}+h(x,t,u,\frac{\partial u}{\partial x^i}),\qquad i,j=1,2,
\end{eqnarray*}
where, $a^{ij}$ and $h$ are smooth functions on $M$ and for some constant $\lambda>0$ we have the parabolic assumption
\begin{eqnarray*}
a^{ij}\xi_{i}\xi_{j}\geq \lambda\parallel \xi\parallel^{2},\quad 0\neq\xi\in\chi(M),
\end{eqnarray*}
that is, all eigenvalues of $A=(a^{ij})_{2\times2}$ have positive signs or equivalently $A$ is positive definite.
\begin{defn}\label{semipar}
Let $M$ be a surface and $\phi:SM\longrightarrow \mathbb{R}$ a smooth function on the sphere bundle $SM$. Consider the following semi-linear strictly parabolic equation on $SM$;
\begin{equation*}
\frac{\partial \phi}{\partial t}=G^{_{AB}}(x,y,t)\hat{e}_{_A}\hat{e}_{_B}\phi+h(x,y,t,\phi,\hat{e}_{_A}\phi),\qquad A,B=1,2,3 ,
\end{equation*}
where, $\hat{e}_{_A}$ is a local frame for the tangent bundle $TSM$ and stand for partial derivatives on $SM$. Here, $G^{_{AB}}$ and $h$ are smooth functions on $SM$ and $G=(G^{_{AB}})$ is positive definite.
\end{defn}
More precisely, a semi-linear strictly parabolic equation on $SM$ can be written in the form
\begin{equation}\label{po}
\frac{\partial \phi}{\partial t}=p^{ab}(x,y,t)\hat{e}_{a}\hat{e}_{b}\phi+q(x,y,t)\hat{e}_{3}
\hat{e}_{3}\phi+m^{a}(x,y,t)\hat{e}_{a}\hat{e}_{3}\phi+\textrm{lower order terms},
\end{equation}
where $a,b=1,2$, and the matrix
\begin{displaymath}
G=\left(\begin{array}{c|c}
P\,\,\, & \frac{1}{2}M \\
\hline\,\,
\frac{1}{2}M^{t}\,\,\ & Q
\end{array}\right)_{3\times3},
\end{displaymath}
is positive definite where, $P=(p^{ab})_{2\times2},Q=(q)_{1\times1}, M=(m^{a})_{2\times1}$.
\begin{lem}\label{mm}
Let $(M,F)$ be a Finsler surface and $\phi:TM\longrightarrow \mathbb{R}$ a zero-homogeneous smooth function on the tangent bundle $TM$. The semi-linear differential equation
\begin{equation}\label{lili}
\frac{\partial \phi}{\partial t}=g^{ij}\frac{\delta^2\phi}{\delta x^i\delta x^j}+F^2g^{ij}\frac{\partial^2 \phi}{\partial y^i\partial y^j}+\textrm{lower order terms},\qquad i,j=1,2,
\end{equation}
is a strictly parabolic equation on $SM$.
\end{lem}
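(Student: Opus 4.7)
The plan is to rewrite equation (\ref{lili}) in the Berwald frame $\{\hat{e}_1,\hat{e}_2,\hat{e}_3\}$ on $SM$ and verify that the principal symbol of the resulting operator fits the template of Definition \ref{semipar} with a positive-definite coefficient matrix $G$. Because $\phi$ is zero-homogeneous in $y$, it descends to a smooth function on $SM$, so the frame derivatives $\hat e_A\phi$ are intrinsic objects on $SM$, and matching coefficients of $\hat e_A\hat e_B\phi$ suffices.

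The first step is to express the inverse metric through the Berwald frame. Writing $\hat{e}_a = u^i_a\,\delta/\delta x^i$ as in (\ref{base1})--(\ref{base2}) and using that $\{e_1,e_2\}$ is $g$-orthonormal on $p^{*}TM$, the identity $g_{ij}u^i_au^j_b=\delta_{ab}$ gives
\[
g^{ij}=u^i_1u^j_1+u^i_2u^j_2.
\]
Substituting this into $g^{ij}\tfrac{\delta^2\phi}{\delta x^i\delta x^j}$ and comparing with the expansion
\[
\hat e_a\hat e_a\phi = u^i_au^j_a\,\tfrac{\delta^2\phi}{\delta x^i\delta x^j}+\textrm{first order in }\phi,
\]
I would conclude that the horizontal second-order piece equals $\hat e_1\hat e_1\phi+\hat e_2\hat e_2\phi$ modulo lower order terms, so $P=I_2$.

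Next, for the vertical part, using $\hat e_3=Fu^i_1\,\partial/\partial y^i$ from (\ref{base3}),
\[
\hat e_3\hat e_3\phi=F^2u^i_1u^j_1\,\tfrac{\partial^2\phi}{\partial y^i\partial y^j}+\textrm{lower order terms}.
\]
The missing contribution $F^2u^i_2u^j_2\,\tfrac{\partial^2\phi}{\partial y^i\partial y^j}$ equals $y^iy^j\tfrac{\partial^2\phi}{\partial y^i\partial y^j}$ since $u^i_2=y^i/F$, and the zero-homogeneity of $\phi$ is decisive here: Euler's relation $y^i\partial\phi/\partial y^i=0$ differentiated once more yields $y^iy^j\tfrac{\partial^2\phi}{\partial y^i\partial y^j}=0$. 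Hence $F^2g^{ij}\tfrac{\partial^2\phi}{\partial y^i\partial y^j}=\hat e_3\hat e_3\phi$ up to lower order terms, giving $Q=(1)$. Since (\ref{lili}) contains no mixed horizontal--vertical second derivative, $M=0$ as well, so the symbol matrix in (\ref{po}) is $G=I_3$, which is positive definite; this places the equation in the form of Definition \ref{semipar} and establishes strict parabolicity.

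The main technical nuisance I anticipate is bookkeeping: converting $g^{ij}\delta_i\delta_j$ and $F^2g^{ij}\partial_{y^i}\partial_{y^j}$ into frame derivatives produces many first-order correction terms from differentiating the coefficients $u^i_a(x,y)$ and from the non-involutivity of the horizontal distribution, and one must verify that none of these contribute to the principal symbol, in particular that no cross term $\hat e_a\hat e_3\phi$ survives. Apart from careful commutator computations, the only essential structural input is the zero-homogeneity of $\phi$, without which the vertical direction would be two-dimensional and the vertical block $Q$ would not collapse to a scalar on $SM$.
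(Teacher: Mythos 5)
Your proposal is correct and follows essentially the same route as the paper: rewrite the horizontal and vertical second-order terms in the Berwald frame $\{\hat e_1,\hat e_2,\hat e_3\}$ via the identities $g^{ij}=u^i_1u^j_1+u^i_2u^j_2$ and $\hat e_3=Fu^i_1\partial/\partial y^i$, check that only first-order corrections arise, and read off the positive-definite block-diagonal principal symbol of Definition \ref{semipar}. You are in fact slightly more explicit than the paper in invoking Euler's relation to kill the $y^iy^j\,\partial^2\phi/\partial y^i\partial y^j$ term (the paper absorbs this into the identity $g^{11}u^i_1u^j_1=g^{ij}-y^iy^j$ without comment), which is a welcome clarification but not a different argument.
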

\begin{proof}
Let us denote again by $\phi$ its restriction on $SM$. According to (\ref{base1}) and (\ref{base2}), replacing $\hat{e}_{a}=u^i_a\frac{\delta}{\delta x^i}$, we obtain
\begin{align*}
&\hat{e}_{a}\phi=u^i_a\frac{\delta \phi}{\delta x^i},\\
&\hat{e}_{b}\hat{e}_{a}\phi=u^{i}_{a}u^{j}_{b}\frac{\delta^2 \phi}{\delta x^i\delta x^j}+\hat{e}_b(u^{i}_{a})(\frac{\delta \phi}{\delta x^i}),\qquad a,b=1,2.
\end{align*}
Multiplying the both sides by $g^{ab}$ leads to
\begin{align*}
g^{ab}\hat{e}_{b}\hat{e}_{a}\phi&=g^{ab}u^{i}_{a}u^{j}_{b}\frac
{\delta^2 \phi}{\delta x^i\delta x^j}+g^{ab}\hat{e}_b(u^{i}_{a})
(\frac{\delta \phi}{\delta x^i})\\
&=g^{ij}\frac{\delta^2 \phi}{\delta x^i\delta x^j}+g^{ab}\hat{e}_b(u^{i}_{a})(\frac{\delta \phi}{\delta x^i}),
\end{align*}
where, $g_{ab}=g_{ij}u^i_au^j_b$. According to (\ref{dualbase}), by using the notations $\omega^{c}:=v^{c}_{i}dx^{i}$ and $B^c:=v^c_ig^{ab}\hat{e}_{b}(u^i_a)$ one can rewrite the expression
 $g^{ij}\frac{\delta^2 \phi}{\delta x^i\delta x^j}$ on $SM$ with respect to $\hat{e}_{a}$ as follows
\begin{equation*}
g^{ab}\hat{e}_{b}\hat{e}_{a}\phi-B^c\hat{e}_c\phi=g^{ij}\frac{\delta^2 \phi}{\delta x^i\delta x^j}, \qquad c=1,2.
\end{equation*}
Hence, (\ref{base3}) yields
\begin{align*}
\hat{e}_{3}\phi&=Fu^{i}_{1}\frac{\partial \phi}{\partial y^i},\\
\hat{e}_{3}\hat{e}_{3}\phi&=\hat{e}_{3}(Fu^{i}_{1}\frac{\partial \phi}{\partial y^i})
\nn\\&=F^2u^j_1 u^i_1\frac{\partial^2 \phi}{\partial y^j\partial y^i}+F(\hat{e}_{3} u^i_{1})(\frac{\partial \phi}{\partial y^i})+Fu^{j}_{1}(\frac{\partial F}{\partial y^j})u^i_1\frac{\partial \phi}{\partial y^i}.
\end{align*}
Using the fact $u^{j}_{1}\frac{\partial F}{\partial y^j}=0$, see \cite[p.\ 161]{HAZ}, we have
\begin{eqnarray*}
\hat{e}_{3}\hat{e}_{3}\phi=F^2u^j_1 u^i_1\frac{\partial^2
\phi}{\partial y^j\partial y^i}+F(\hat{e}_{3} u^i_{1})(\frac{\partial \phi}{\partial y^i}).
\end{eqnarray*}
Multiplying the both sides by $g^{11}$ and taking into account that $g^{11}u^{i}_{1}u^{j}_{1}=g^{ij}-y^iy^j$ we get
\begin{eqnarray*}
g^{11}\hat{e}_{3}\hat{e}_{3}\phi=F^2g^{ij}\frac{\partial^2 \phi}{\partial y^j\partial y^i}+Fg^{11}(\hat{e}_{3}u^i_{1})\frac{\partial\phi}{\partial y^i},
\end{eqnarray*}
where, $g_{11}=g_{ij}u^i_1 u^j_1$. According to (\ref{dualbase1}), we have $\omega^{3}=v^{1}_{i}\frac{\delta y^i}{F}$. Denoting $D^1:=v^1_iFg^{11}\hat{e}_{3}u^i_{1}$ one can rewrite the expression $F^2g^{ij}\frac{\partial^2 \phi}{\partial y^j\partial y^i}$ on $SM$ with respect to $\hat{e}_{3}$ as follows
\begin{equation*}
g^{11}\hat{e}_{3}\hat{e}_{3}\phi-D^1\hat{e}_{3}\phi=F^2g^{ij}\frac{\partial^2 \phi}{\partial y^j\partial y^i}.
\end{equation*}
Thus the principal order terms $g^{ij}\frac{\delta^{2}\phi}{\delta x^{i}\delta x^{j}}$ and $F^2g^{ij}\frac{\partial^{2}\phi}{\partial y^{i}\partial y^{j}}$ convert to $g^{ab}\hat{e}_{b}\hat{e}_{a}\phi-B^c\hat{e}_c\phi$
and $g^{11}\hat{e}_{3}\hat{e}_{3}\phi-D^1\hat{e}_{3}\phi$ on $SM$. On the other hand, the order of lower order terms in (\ref{lili}) do not change after rewriting them in terms of the basis $\{\hat{e}_1,\hat{e}_{2},\hat{e}_3\}$ on $SM$. Therefore (\ref{lili}) on $SM$ is written as
\begin{equation}\label{bibi}
\frac{\partial \phi}{\partial t}=g^{ab}\hat{e}_{b}\hat{e}_{a}\phi+g^{11}
\hat{e}_{3}\hat{e}_{3}\phi-B^c\hat{e}_c\phi-D^1\hat{e}_{3}\phi+\textrm{lower order terms},
\end{equation}
where, $a,b,c=1,2$. Using the fact that $g$ is positive definite, the coefficient
\begin{displaymath}
G=\left(\begin{array}{c|c}
g^{ab} & 0 \\
\hline
0 & g^{11}
\end{array}\right)_{3\times3},
\end{displaymath}
of principal order terms of (\ref{bibi}) is positive definite on $SM$. Therefore, by virtue of (\ref{po}) the  differential equation (\ref{bibi}) is a semi-linear strictly parabolic equation on $SM$.
\end{proof}
\section{A vector field on $SM$}
Let $(M,F)$ and $(N,\bar{F})$ be two Finsler surfaces with the corresponding metric tensors $g$ and $h$, respectively. Let $(x^i,y^i)$ and $(\bar{x}^i,\bar{y}^i)$ be local coordinate systems on $TM$ and $TN$, respectively. Let $c$ be a geodesic on $M$. The natural lift of $c$ on $TM$, namely,
$$\tilde{c}:t\in I:\longrightarrow\tilde{c}(t)=(x^{i}(t),(\frac{dx^i}{dt})(t))\in TM,$$
  is a horizontal curve. That is, its tangent vector field $\dot{\tilde{c}}(t)=\frac{dx^i}{dt}\frac{\delta}{\delta x^i}$, is horizontal.
   Consider a diffeomorphism
\begin{align*}
\varphi &:TM\longrightarrow TN,\\
&(x^i,y^i)\mapsto\varphi(x^i,y^i)=(\varphi^{\alpha}(x^i,y^i))=(\varphi^j(x^i,y^i),\varphi^{2+j}(x^i,y^i)),
\end{align*}
 such that $\bar{c}(t):=(\varphi\circ\tilde{c})(t)$ is a horizontal curve, where $i,j=1,2,$ and $\alpha =1,...,4$. Throughout this section, $\varphi$ takes horizontal curves to horizontal curves.

 Let us denote by $\Gamma^{i}_{\,jk}$ and $\bar{\Gamma}^{i}_{\,jk}$ the coefficients of horizontal covariant derivatives of Chern connection on $(M,F)$ and $(N,\bar{F})$, respectively. Then we have
\begin{align}\label{12}
\bar{\nabla}_{\dot{\bar{c}}}\dot{\bar{c}}&=\bar{\nabla}_{\dot{\bar{c}}}
\frac{d\bar{x}^j}{dt}\frac{\delta}{\delta \bar{x}^j}=\frac{d^{2}\bar{x}^{j}}{dt^{2}}\frac{\delta}{\delta \bar{x}^j}+\frac{d\bar{x}^j}{dt}\bar{\nabla}_{\dot{\bar{c}}}\frac{\delta}{\delta \bar{x}^j}
=(\frac{d^{2}\bar{x}^{j}}{dt^{2}}+\frac{d\bar{x}^j}{dt}\frac{d\bar{x}^k}{dt}\bar{\Gamma}^{i}_{\,jk})\frac{\delta}{\delta \bar{x}^{i}}.
\end{align}
On the other hand
\begin{align*}
\frac{d\bar{x}^i}{dt}=\frac{\delta\varphi^i}{\delta x^p}\frac{dx^p}{dt}, \quad \frac{d^{2}\bar{x}^i}{dt^2}=\frac{\delta^2\varphi^i}{\delta x^p\delta x^q}\frac{dx^p}{dt}\frac{dx^q}{dt}+\frac{\delta\varphi^i}{\delta x^p}\frac{d^2x^p}{dt^2}.
\end{align*}
Replacing the last equations in (\ref{12}) leads to
\begin{equation}\label{12+1+1}
\bar{\nabla}_{\dot{\bar{c}}}\dot{\bar{c}}=(\frac{\delta^2\varphi^i}{\delta x^p\delta x^q}\frac{dx^p}{dt}\frac{dx^q}{dt}+\frac{\delta\varphi^i}{\delta x^h}\frac{d^2x^h}{dt^2}+\bar{\Gamma}^{i}_{\,jk}\frac{\delta\varphi^j}{\delta x^p}\frac{\delta\varphi^k}{\delta x^q}\frac{dx^p}{dt}\frac{dx^q}{dt})\frac{\delta}{\delta \bar{x}^{i}},
\end{equation}
where, all the indices run over the range $1,2$.
The geodesic  $c$   on $M$, satisfies
\begin{equation}\label{12+1+1+1}
\frac{d^2x^h}{dt^2}+\Gamma^{h}_{\,pq}\frac{dx^p}{dt}\frac{dx^q}{dt}=0.
\end{equation}
Substituting   $ \frac{d^2x^h}{dt^2}$  from the last equation in (\ref{12+1+1}), leads to
\begin{equation*}
\bar{\nabla}_{\dot{\bar{c}}}\dot{\bar{c}}=\frac{dx^p}{dt}\frac{dx^q}{dt}(\frac{\delta^2\varphi^i}{\delta x^p\delta x^q}-\frac{\delta\varphi^i}{\delta x^h}\Gamma^{h}_{\,pq}+\bar{\Gamma}^{i}_{\,jk}\frac{\delta\varphi^j}{\delta x^p}\frac{\delta\varphi^k}{\delta x^q})\frac{\delta}{\delta \bar{x}^{i}}.
\end{equation*}
Next, let
\begin{eqnarray*}
A^{i}_{pq}:=\frac{\delta^2\varphi^i}{\delta x^p\delta x^q}-\frac{\delta\varphi^i}{\delta x^h}\Gamma^{h}_{\,pq}+\bar{\Gamma}^{i}_{\,jk}\frac{\delta\varphi^j}{\delta x^p}\frac{\delta\varphi^k}{\delta x^q}+F^2\frac{\partial^2\varphi^i}{\partial y^p\partial y^q}.
\end{eqnarray*}
Contracting $A^i_{pq}$ with $g^{pq}$ leads to the following operators
\begin{equation} \label{15}
(\Phi_{g,h}\varphi)^{i}:=g^{pq}(\frac{\delta^2\varphi^i}{\delta x^p\delta x^q}+F^2\frac{\partial^2\varphi^i}{\partial y^p\partial y^q}-\frac{\delta\varphi^i}{\delta x^h}\Gamma^{h}_{\,pq}+\bar{\Gamma}^{i}_{\,jk}\frac{\delta\varphi^j}{\delta x^p}\frac{\delta\varphi^k}{\delta x^q}),
\end{equation}
where, $(\Phi_{g,h}\varphi)^{i}=g^{pq}A^{i}_{pq}$ and $i=1,2$. For greater indices we consider the following operator.
\begin{equation} \label{15+1+1}
(\Phi_{g,h}\varphi)^{2+i}:=g^{pq}(\frac{\delta^2\varphi^{2+i}}{\delta x^p\delta x^q}+F^2\frac{\partial^2\varphi^{2+i}}{\partial y^p\partial y^q}+\frac{\partial\varphi^{2+i}}{\partial y^k}\frac{\delta N^k_q}{\delta x^p}),
\end{equation}
where, $i=1,2$. Summarizing the above definitions we have
\begin{equation*}
(\Phi_{g,h}\varphi)^{\alpha}=\left\{
\begin{array}{l}
(\Phi_{g,h}\varphi)^{i}\qquad \alpha=i\cr
(\Phi_{g,h}\varphi)^{2+i}\qquad \alpha=2+i,
\end{array}
\right.
\end{equation*}
where, $i=1,2.$ 
Next, we show the operator $(\Phi_{g,h}\varphi)^{\alpha}$ is invariant under all diffeomorphisms on $TM$.
\begin{lem} \label{main2}
Let $(M,F)$ and $(N,\bar{F})$ be two Finsler surfaces with the corresponding metric tensors $g$ and $h$, respectively. If $\psi$ is a diffeomorphism from $TM$ to itself, then it leaves invariant the operator
$(\Phi_{g,h}\varphi)^\alpha$, that is
\begin{eqnarray*}
(\Phi_{\psi^{*}(g),h}\psi^{*}\varphi)^\alpha\mid_{(\tilde{x},\tilde{y})}=(\Phi_{g,h}\varphi)^\alpha\mid_{(x,y)},\qquad \alpha=1,..,4,
\end{eqnarray*}
where, $\tilde{x}^i=\psi^{*}x^i$ and $\tilde{y}^i=\psi^{*}y^i$.
\end{lem}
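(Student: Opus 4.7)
My plan is to view $(\Phi_{g,h}\varphi)^\alpha$ as a Finslerian tension-field-type operator and to deduce its diffeomorphism invariance from the tensorial nature of its individual blocks, followed by contraction with the cometric. As is natural in the DeTurck setup and consistent with the paragraph preceding the lemma, I will treat $\psi$ as the canonical lift to $TM$ of a diffeomorphism of the base $M$, so that $\tilde x^i=\tilde x^i(x)$ and $\tilde y^i=(\partial \tilde x^i/\partial x^j)\,y^j$; with this choice the horizontal/vertical decomposition is preserved by $\psi$. The preliminary step is to collect the transformation laws under $\psi$ of all objects entering (\ref{15}) and (\ref{15+1+1}): $\delta/\delta x^i$ and $\partial/\partial y^i$ by the chain rule, $g^{pq}$ and $F^2$ as a contravariant $2$-tensor and a scalar respectively, $\Gamma^h_{pq}$ by its standard inhomogeneous Chern transformation law (with $\delta/\delta x$ replacing $\partial/\partial x$), and the nonlinear connection $N^k_q$ by the companion inhomogeneous rule that globalizes $\delta/\delta x^i$.

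For $\alpha=i\in\{1,2\}$, the combination
\[
\frac{\delta^2\varphi^i}{\delta x^p\delta x^q}-\frac{\delta\varphi^i}{\delta x^h}\Gamma^h_{pq}+\bar\Gamma^i_{jk}\frac{\delta\varphi^j}{\delta x^p}\frac{\delta\varphi^k}{\delta x^q}
\]
is just the horizontal Chern Hessian of the base part of $\varphi:(M,F)\to(N,\bar F)$, hence a $(1,2)$-tensor in the source indices $(p,q)$ with values in the pullback bundle $\varphi^{*}TN$. The vertical piece $F^2\,\partial^2\varphi^i/\partial y^p\partial y^q$ is likewise tensorial in $(p,q)$: $F^2$ is a scalar, $\partial/\partial y^p$ transforms contragrediently, and no connection correction is needed on the vertical side since Chern vertical covariant differentiation coincides with $\dot\partial$. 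Contracting with the symmetric cometric $g^{pq}$ then yields an invariant expression at matched points. For $\alpha=2+i$, however, the fibre components $\varphi^{2+i}$ are not scalars on $M$; their transformation under $\psi$ is inhomogeneous through the nonlinear connection, and the extra term $(\partial\varphi^{2+i}/\partial y^k)(\delta N^k_q/\delta x^p)$ in (\ref{15+1+1}) is exactly the correction that absorbs this inhomogeneity and turns the bracket into a $(1,2)$-tensor in $(p,q)$.

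Finally, assembling both cases and contracting with $\psi^{*}g^{pq}$, which is contragredient in the source indices, each component block becomes a diffeomorphism-invariant scalar with a free target index $\alpha$, giving
\[
(\Phi_{\psi^{*}g,h}\psi^{*}\varphi)^\alpha\mid_{(\tilde x,\tilde y)}=(\Phi_{g,h}\varphi)^\alpha\mid_{(x,y)}.
\]
The main obstacle is the $\alpha=2+i$ calculation: since $N^k_q$ itself transforms inhomogeneously, one must expand $\tilde\delta_p\tilde\delta_q(\psi^{*}\varphi^{2+i})$ carefully, track every occurrence of $\delta_p N^k_q$ produced by $\psi$, and verify that exactly the correction built into (\ref{15+1+1}) cancels the non-tensorial remainder. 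This delicate bookkeeping, together with a careful application of the Chern transformation law for $\Gamma^h_{pq}$ in the Finslerian setting, is where the bulk of the calculation lies; the $\alpha=i$ case, by contrast, is essentially a direct transcription of the standard Riemannian tension-field argument into the horizontal-Chern framework.
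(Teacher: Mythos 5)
Your strategy is genuinely different from the paper's. The paper proves the identity by direct relabeling: it writes out $(\Phi_{g,h}\varphi)^i\mid_{(x,y)}$ in components, substitutes $(x,y)=\psi(\tilde x,\tilde y)$, and identifies each factor $f\circ\psi$ with the corresponding pulled-back object $\psi^{*}f$ evaluated at $(\tilde x,\tilde y)$ in the coordinates $\tilde x^i=\psi^{*}x^i$, $\tilde y^i=\psi^{*}y^i$, in which all Jacobian factors are trivially the identity; the $\alpha=2+i$ case is then dispatched with ``similarly''. You instead aim at the stronger statement that each block of $A^i_{pq}$ (and of the bracket in (\ref{15+1+1})) is a genuine $(1,2)$-tensor in the source indices, and deduce invariance by contraction with the cometric. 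If completed, that would be a more geometric and more informative argument than the paper's.

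There are, however, two concrete gaps. First, you restrict $\psi$ to be the canonical lift of a diffeomorphism of the base $M$, with $\tilde y^i=(\partial\tilde x^i/\partial x^j)\,y^j$. The lemma is stated for an arbitrary diffeomorphism of $TM$, and, more importantly, it is applied in Proposition \ref{main12} to $\psi=\varphi_t$, where $\varphi_t$ is the flow of $\xi=\xi^i(x,y)\frac{\partial}{\partial x^i}$ or, more generally, a solution of $\partial_t\varphi_t=\Phi_{g(t),h}\varphi_t$. Such maps move the base point in a $y$-dependent way and are therefore not lifts of base diffeomorphisms; your hypothesis excludes exactly the maps for which the lemma is needed. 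Second, you yourself single out the $\alpha=2+i$ case --- verifying that the term $\frac{\partial\varphi^{2+i}}{\partial y^k}\frac{\delta N^k_q}{\delta x^p}$ absorbs the inhomogeneous part of the transformation of the second horizontal derivatives of $\psi^{*}\varphi^{2+i}$ --- as the crux, and then leave it as an unproved assertion. Since this is precisely where the tensoriality claim could fail (the nonlinear connection of $\psi^{*}g$ need not be the naive pullback of that of $g$ once $\psi$ is not a point transformation), the proposal as written does not yet establish the lemma.
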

\begin{proof}
Let $(x^i,y^i)$ and $(\bar{x}^i,\bar{y}^i)$ be the two local coordinate systems on $TM$ and $TN$, respectively and $\tilde{x}^i=\psi^{*}x^i$ and $\tilde{y}^i=\psi^{*}y^i$. For $\alpha=i$, we have
\begin{align*}
(\Phi_{g,h}\varphi)^i\mid_{(x,y)}&=g^{pq}(x,y)\Big (\frac{\delta^{2}\varphi^{i}}{\delta x^{p} \delta x^{q}}(x,y)+F^2(x,y)\frac{\partial^2\varphi^i}{\partial y^p\partial y^q}(x,y)\\
&\quad-\frac{\delta \varphi^{i}}{\delta x^{k}}(x,y)\Gamma^{k}_{pq}(x,y)+\bar{\Gamma}^{i}_{jk}(\bar{x},\bar{y})\frac{\delta \varphi^{j}}{\delta x^{p}}(x,y)\frac{\delta \varphi^{k}}{\delta x^{q}}(x,y)\Big)\\
&=g^{pq}(\psi(\tilde{x},\tilde{y}))\Big(\frac{\delta^{2}\varphi^{i}}{\delta x^{p} \delta x^{q}}(\psi(\tilde{x},\tilde{y}))+F^2(\psi(\tilde{x},\tilde{y}))\frac{\partial^2\varphi^i}{\partial y^p\partial y^q}(\psi(\tilde{x},\tilde{y})) \\
&\quad-\frac{\delta \varphi^{i}}{\delta x^{k}}(\psi(\tilde{x},\tilde{y}))\Gamma^{k}_{pq}(\psi(\tilde{x},\tilde{y}))\\
&\quad+\bar{\Gamma}^{i}_{jk}(\bar{x},\bar{y})\frac{\delta \varphi^{j}}{\delta x^{p}}(\psi(\tilde{x},\tilde{y}))\frac{\delta \varphi^{k}}{\delta x^{q}}(\psi(\tilde{x},\tilde{y}))\Big)\\
&=(\psi^{*}
g)^{pq}(\tilde{x},\tilde{y})\Big(\frac{\delta^{2}(\psi^{*}\varphi)^{i}}{\delta \tilde{x}^{p} \delta \tilde{x}^{q}}(\tilde{x},\tilde{y})+(\psi^{*}F^2)(\tilde{x},\tilde{y})\frac{\partial^{2}(\psi^{*}\varphi)^{i}}{\partial \tilde{y}^{p} \partial \tilde{y}^{q}}(\tilde{x},\tilde{y})\\
&\quad-\frac{\delta (\psi^{*}\varphi )^{i}}{\delta \tilde{x}^{k}}(\tilde{x},\tilde{y})\Gamma(\psi^{*}g)^{k}_{pq}(\tilde{x},\tilde{y})\\
&\quad+\bar{\Gamma}^{i}_{jk}(\bar{x},\bar{y})\frac{\delta (\psi^{*}\varphi)^{j}}{\delta \tilde{x}^{p}}(\tilde{x},\tilde{y})\frac{\delta (\psi^{*}\varphi)^{k}}{\delta \tilde{x}^{q}}(\tilde{x},\tilde{y})\Big) \\
&=(\Phi_{\psi^{*}(g),h}\psi^{*}\varphi)^i\mid_{(\tilde{x},\tilde{y})}.\nonumber
\end{align*}
Similarly, for $\alpha=2+i$, one can show that
\begin{equation*}
(\Phi_{g,h}\varphi)^{2+i}\mid_{(x,y)}=(\Phi_{\psi^{*}(g),h}\psi^{*}\varphi)^{2+i}\mid_{(\tilde{x},\tilde{y})},
\end{equation*}
where, $i=1,2.$ This completes the proof.
\end{proof}
\begin{rem} \label{main3+1}
Let $(M,F)$ and $(N,\bar{F})$ be two Finsler surfaces with the corresponding metric tensors $g$ and $h$, respectively. Let $\varphi:TM\longrightarrow TN$, $\varphi(x^i,y^i)=(\varphi^\alpha(x^i,y^i))$, $\alpha=1,.., 4$, be a diffeomorphism and takes horizontal curves to horizontal curves. Given $\varphi_{0}:TM \longrightarrow TN$, we consider the following evolution equation
\begin{equation} \label{1666}
\frac{\partial}{\partial t}\varphi^\alpha=(\Phi_{g,h}\varphi)^\alpha,\hspace{0.6cm} \varphi_{(0)}=\varphi_{0}.
\end{equation}
By restricting $\varphi^\alpha$'s to $SM$ and using Lemma \ref{mm}, one can see that (\ref{1666}) is a strictly parabolic system. Hence, there is a unique solution for (\ref{1666}) in short time.
\end{rem}
\begin{cor} \label{main3}
Let $(M,\tilde{F})$ and $(N,\bar{F})$ be two Finsler surfaces with corresponding metric tensors $\tilde{g}$ and $h$, respectively. Let $N=M$ and $\varphi$ be the identity map $\varphi=Id:TM\longrightarrow TM$, $\varphi(x^i,y^i)=(x^i,y^i)$, then we have
\begin{equation*} 
(\Phi_{\tilde{g},h}Id)^{\alpha}=(\Phi_{\tilde{g},h}Id)^{i}=\tilde{g}^{pq}(-\tilde{\Gamma}^{i}_{pq}+\bar{\Gamma}^{i}_{pq}),\quad \alpha=i,
\end{equation*}
where, $i=1,2$ and $\tilde{\Gamma}^{i}_{pq}, \bar{\Gamma}^{i}_{pq}$ are the coefficients of horizontal covariant derivatives of Chern connection with respect to the metrics $\tilde{g}$ and $h$, respectively.
\end{cor}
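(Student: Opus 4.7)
The plan is a direct computation: substitute $\varphi^i = x^i$ and $\varphi^{2+i} = y^i$ into the two defining formulas (\ref{15}) and (\ref{15+1+1}) and simplify. The key preliminary observation I would establish first is how the horizontal derivative $\delta/\delta x^h = \partial/\partial x^h - N^j_h\,\partial/\partial y^j$ acts on the coordinate functions on $TM$. I would note that $\delta x^i/\delta x^h = \delta^i_h$ since $x^i$ has zero $y$-derivative, whereas $\delta y^i/\delta x^h = -N^j_h\,\delta^i_j = -N^i_h$ because $y^i$ has zero $x$-derivative. These two facts do the bulk of the work.

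For the case $\alpha = i$ I would proceed as follows. With $\varphi^i = x^i$, the first-order horizontal derivative is $\delta\varphi^i/\delta x^h = \delta^i_h$, the second-order iteration $\delta^2\varphi^i/\delta x^p\delta x^q = 0$ (since $\delta^i_q$ is constant), and the second-order vertical derivative $\partial^2\varphi^i/\partial y^p\partial y^q = 0$. Plugging into (\ref{15}) with metrics $\tilde g$ on the source and $h$ on the target, only the two connection terms survive, giving
\begin{equation*}
(\Phi_{\tilde g,h}\mathrm{Id})^{i} = \tilde g^{pq}\bigl(-\delta^{i}_{h}\tilde\Gamma^{h}_{pq} + \bar\Gamma^{i}_{jk}\,\delta^{j}_{p}\delta^{k}_{q}\bigr) = \tilde g^{pq}\bigl(-\tilde\Gamma^{i}_{pq} + \bar\Gamma^{i}_{pq}\bigr),
\end{equation*}
which is exactly the claimed formula.

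For the case $\alpha = 2+i$ I would have to show the expression collapses to zero, explaining the statement $(\Phi_{\tilde g,h}\mathrm{Id})^{\alpha} = (\Phi_{\tilde g,h}\mathrm{Id})^{i}$. With $\varphi^{2+i} = y^i$, I get $\delta\varphi^{2+i}/\delta x^p = -N^i_p$, hence $\delta^2\varphi^{2+i}/\delta x^p\delta x^q = -\delta N^i_q/\delta x^p$, while $\partial^2\varphi^{2+i}/\partial y^p\partial y^q = 0$ and $\partial\varphi^{2+i}/\partial y^k = \delta^i_k$. Substituting into (\ref{15+1+1}) produces
\begin{equation*}
(\Phi_{\tilde g,h}\mathrm{Id})^{2+i} = \tilde g^{pq}\Bigl(-\tfrac{\delta N^{i}_{q}}{\delta x^{p}} + \delta^{i}_{k}\tfrac{\delta N^{k}_{q}}{\delta x^{p}}\Bigr) = 0,
\end{equation*}
so only the first two components are nontrivial, matching the corollary's assertion.

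There is essentially no conceptual obstacle here, so I expect the whole argument to be a few lines of symbol-pushing once the two coordinate-derivative identities above are in hand. The mildly delicate point, and the only one worth flagging in the write-up, is remembering that $\delta/\delta x^h$ is not the naive partial derivative, so one must carefully track that $\delta y^i/\delta x^h = -N^i_h$ rather than $0$; this is what makes the two terms in the $\alpha=2+i$ computation cancel exactly, confirming consistency with the definition (\ref{15+1+1}).
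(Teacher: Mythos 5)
Your computation is correct and is exactly the argument the paper intends: the corollary is stated without proof precisely because it follows from substituting $\varphi^i=x^i$, $\varphi^{2+i}=y^i$ into (\ref{15}) and (\ref{15+1+1}) and using $\delta x^i/\delta x^h=\delta^i_h$ and $\delta y^i/\delta x^h=-N^i_h$, as you do. Your explicit verification that the components $(\Phi_{\tilde g,h}\mathrm{Id})^{2+i}$ vanish is a worthwhile addition, since it is the only nontrivial point and it explains why the corollary records only the $\alpha=i$ components.
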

Let $\xi$ be a vector field on $TM$ with the components
\begin{align}\label{def;vect}
\xi:=(\Phi_{\tilde{g},h}Id)^{i}\frac{\partial}{\partial x^i}=\tilde{g}^{pq}(-\tilde{\Gamma}^{i}_{pq}+\bar{\Gamma}^{i}_{pq})\frac{\partial}{\partial x^i}.
\end{align}
 Using the fact that the difference of two connections is a tensor, $\xi$ is a globally well-defined vector field. It can be easily verified that the components of $\xi$ are homogeneous of degree zero on $y$, thus $\xi$ can be considered as a vector field on $SM$.
\section{Ricci-DeTurck flow and its existence and uniqueness of solution}
There are several well known definitions for Ricci tensor in Finsler geometry. For instance, H. Akbar-Zadeh has considered two Ricci tensors on Finsler manifolds in his works namely, one is defined by $ Ric_{ij}:=[\frac{1}{2}F^{2}\mathcal{R}ic]_{y^{i}y^{j}}$ where, $\mathcal{R}ic$ is the \emph{Ricci scalar} defined by $\mathcal{R}ic:=g^{ik}R_{ik}=R^{i}_{\,\,i}$ and $R^{i}_{\,\,k}$ are defined by (\ref{18}), see \cite[p.\ 192]{BCS}. Another Ricci tensor is defined by
 $Rc_{ij}:=\frac{1}{2} (\textsf{R}_{ij}+\textsf{R}_{ji})$, where $ \textsf{R}_{ij}$ is the trace of $hh$-curvature defined by $ \textsf{R}_{ij}=R^{\,\,l}_{i\,\,lj}$. The difference between these two Ricci tensors is the additional term $\frac{1}{2}y^k\frac{\partial  \textsf{R}_{jk}}{\partial y^i}$ appeared in the first definition. More precisely, we have $Ric_{ij}-Rc_{ij}=\frac{1}{2}y^k\frac{\partial  \textsf{R}_{jk}}{\partial y^i}$. D. Bao based on the first definition of Ricci tensor has considered the following Ricci flow in Finsler geometry,
\begin{equation}\label{20002}
\frac{\partial}{\partial t}g_{jk}(t)=-2Ric_{jk},\hspace{0.6cm}g_{(t=0)}=g_{0},
\end{equation}
where, $g_{jk}(t)$ is a family of Finslerian metrics defined on $\pi^{*}TM\times[0,T)$.
Contracting (\ref{20002}) with $y^{j}y^{k}$, via Euler's theorem, leads to $\frac{\partial}{\partial t}F^{2}=-2F^{2}\mathcal{R}ic$. That is,
\begin{equation} \label{20}
\frac{\partial}{\partial t}\log F(t)=-\mathcal{R}ic,\hspace{0.6cm}F(t=0):=F_{0},
\end{equation}
where, $F_{0}$ is the initial Finsler structure, see \cite{Bao}. Here and everywhere in the present work  we consider the first Akbar-Zadeh's definition of Ricci tensor and the related  Ricci flow (\ref{20}). One of the advantages of the Ricci quantity $Ric_{ij}$, used in the present work is its independence on the choice of Cartan, Berwald or Chern connections.
\begin{defn} \label{Main4}
Let $M$ be a compact surface with a fixed background Finsler structure $\bar{F}$ and related Finsler metric $h$. Assume that for all $t\in [0,T)$, $\tilde{F}(t)$ is a one-parameter family of Finsler structures on $TM$ and $\tilde{g}(t)$ is the tensor metric related to $\tilde{F}(t)$. We say that $\tilde{F}(t)$ is a solution to the Finslerian Ricci-DeTurck flow if
\begin{equation} \label{22}
\frac{\partial}{\partial t}\tilde{F}^{2}(t)=-2\tilde{F}^{2}(t)\mathcal{R}ic(\tilde{g}(t))-\mathcal{L}_{\xi}\tilde{F}^{2}(t),
\end{equation}
where, $\mathcal{L}_{\xi}$ is the Lie derivative with respect to the vector field $\xi=(\Phi_{\tilde{g}(t),h}Id)^{i}\frac{\partial}{\partial x^i}$ on $SM$ as mentioned earlier.
\end{defn}
The following theorem shows that the Ricci-DeTurck flow (\ref{22}) is well defined and has a unique solution on a short time interval.\\

{\bf Proof of Theorem \ref{main8}.}
Let $M$ be a compact surface with a fixed background Finsler structure $\bar{F}$ and the related Finsler metric $h$. Here, all the indices run over the range $1,2$. The Ricci-DeTurck flow (\ref{22}) can be written in the following form
\begin{equation}\label{28}
y^{p}y^{q}\frac{\partial}{\partial t}\tilde{g}_{pq}(t)=-2\tilde{F}^{2}(t)\mathcal{R}ic(\tilde{g}(t))-\mathcal{L}_{\xi}y^{p}y^{q}\tilde{g}_{pq}(t),
\end{equation}
where, $\tilde{g}(t)$ is the metric tensor related to $\tilde{F}(t)$. Also we have
\begin{equation*}
\mathcal{L}_{\xi}(y^{p}y^{q}\tilde{g}_{pq})=y^{p}y^{q}\mathcal{L}
_{\xi}\tilde{g}_{pq}+2y^{p}\tilde{g}_{pq}\mathcal{L}_{\xi}y^{q}.
\end{equation*}
Therefore, (\ref{28}) becomes
\begin{equation}\label{2800}
y^{p}y^{q}\frac{\partial}{\partial t}\tilde{g}_{pq}(t)=-2\tilde{F}^{2}(t)\mathcal{R}ic(\tilde{g}(t))-y^{p}y^{q}\mathcal{L}
_{\xi}\tilde{g}_{pq}-2y^{p}\tilde{g}_{pq}\mathcal{L}_{\xi}y^{q}.
\end{equation}
By means of the Lie derivative formula (\ref{FINAL}) along $\xi$ we have
\begin{equation}\label{30}
y^py^q\mathcal{L}_{\xi}\tilde{g}_{pq}=2y^py^q\nabla_{p}\xi_{q},
\end{equation}
where, $\nabla_{p}$ is the horizontal covariant derivative in Chern connection. Using its $h$-metric compatibility, $\nabla_{p}\xi_{q}$ becomes
\begin{eqnarray*}
\nabla_{p}\xi_{q}=(\nabla_{p}\tilde{g}_{ql}\xi^{l})=
\tilde{g}_{ql}(\nabla_{p}\xi^{l}).
\end{eqnarray*}
As mentioned earlier, if we denote the coefficients of horizontal covariant derivatives of Chern connection with respect to the metric tensors $h$ and $\tilde{g}$ by $\Gamma(h)$ and $\Gamma(\tilde{g})$, respectively, then by definition \eqref{def;vect} of $\xi$ we have
\begin{align*}
\nabla_{p}\xi_{q}&=\tilde{g}_{ql}(\delta_{p}\xi^{l}+\Gamma(\tilde{g})^{l}_{pw}\xi^{w})\\
&=\tilde{g}_{ql}[\delta_{p}(\tilde{g}^{mn}(\Gamma(h)^{l}_{mn}-\Gamma(\tilde{g})^{l}_{mn}))]+\tilde{g}_{ql}\Gamma(\tilde{g})^{l}_{pw}\xi^{w}\\
&=\tilde{g}_{ql}[(\delta_{p}\tilde{g}^{mn})(\Gamma(h)^{l}_{mn}-\Gamma(\tilde{g})^{l}_{mn})+\tilde{g}^{mn}\delta_{p}(\Gamma(h)^{l}_{mn})-\tilde{g}^{mn}\delta_{p}(\Gamma(\tilde{g})^{l}_{mn})]\nonumber\\
&\quad +\tilde{g}_{ql}\Gamma(\tilde{g})^{l}_{pw}\xi^{w}\\
&=\frac{1}{2}\tilde{g}^{mn}(\delta_{p}\delta_{q}\tilde{g}_{mn}
-\delta_{p}\delta_{n}\tilde{g}_{qm}-\delta_{p}\delta_{m}\tilde{g}_{qn})\nonumber\\
&\quad -\frac{1}{2}\tilde{g}_{ql}\tilde{g}^{mn}(\delta_{p}\tilde{g}^{ls})
(\delta_{n}\tilde{g}_{sm}-\delta_{s}\tilde{g}_{mn}+\delta_{m}\tilde{g}_{ns})\\
&\quad +\tilde{g}_{ql}(\delta_{p}\tilde{g}^{mn})(\Gamma(h)^{l}_{mn}-\Gamma(\tilde{g})^{l}_{mn})
 +\tilde{g}_{ql}\tilde{g}^{mn}\delta_{p}(\Gamma(h)^{l}_{mn})+\tilde{g}_{ql}\Gamma(\tilde{g})^{l}_{pw}\xi^{w}.\\
\end{align*}
Using the last equation, (\ref{30}) is written
\begin{align} \label{31}
y^{p}y^{q}\mathcal{L}_{\xi}\tilde{g}_{pq}=&y^py^q\tilde{g}^{mn}(\delta_{p}\delta_{q}\tilde{g}_{mn}
-\delta_{p}\delta_{n}\tilde{g}_{qm}-\delta_{p}\delta_{m}\tilde{g}_{qn})\nonumber\\
&-y^py^q\tilde{g}_{ql}\tilde{g}^{mn}(\delta_{p}\tilde{g}^{ls})
(\delta_{n}\tilde{g}_{sm}-\delta_{s}\tilde{g}_{mn}+\delta_{m}\tilde{g}_{ns})\nonumber\\
&+2y^py^q\tilde{g}_{ql}(\delta_{p}\tilde{g}^{mn})(\Gamma(h)^{l}_{mn}-\Gamma(\tilde{g})^{l}_{mn})\nonumber\\
&+2y^py^q\tilde{g}_{ql}\tilde{g}^{mn}\delta_{p}(\Gamma(h)^{l}_{mn})+2y^py^q\tilde{g}_{ql}\Gamma
(\tilde{g})^{l}_{pw}\xi^{w}.
\end{align}
Also we have
\begin{equation} \label{32}
-2\tilde{F}^{2}\mathcal{R}ic(\tilde{g})=-2\tilde{F}^{2}R^{n}_{\,\,n}
=-2\tilde{F}^{2}l^{q}R^{\,\,n}_{q\,\,np}l^{p},
\end{equation}
where, $R^{\,\,n}_{q\,\,np}$ are the components of hh-curvature tensor of Chern connection and $l^{q}=\frac{y^{q}}{\tilde{F}}$ are the components of Liouville vector field. Replacing (\ref{77}) in (\ref{32}) and using the definition of $\Gamma(\tilde{g})$, yields
\begin{align*}
-2\tilde{F}^{2}\mathcal{R}ic(\tilde{g})&=-2\tilde{F}^{2}l^{q}R^{\,\,n}_{q\,\,np}l
^{p}\nonumber\\
&=-2y^{p}y^{q}(\delta_{n}\Gamma^{n}_{qp}(\tilde{g})-\delta_{p}\Gamma^{n}_{qn}(\tilde{g})
+\Gamma^{n}_{mn}(\tilde{g})\Gamma^{m}_{qp}(\tilde{g})-\Gamma^{n}_{mp}(\tilde{g})\Gamma^{m}
_{qn}(\tilde{g}))\nonumber \\
&=-2y^{p}y^{q}[\delta_{n}(\frac{1}{2}\tilde{g}^{mn}(\delta_{q}\tilde{g}_{mp}+\delta_{p}\tilde{g}_{mq}-\delta_{m}\tilde{g}_{pq}))]\nonumber\\
&\quad +2y^{p}y^{q}[\delta_{p}(\frac{1}{2}\tilde{g}^{mn}(\delta_{q}\tilde{g}_{mn}+\delta_{n}\tilde{g}_{qm}-\delta_{m}\tilde{g}_{qn}))] \nonumber\\
&\quad -2y^{p}y^{q}(\Gamma^{n}_{mn}(\tilde{g})\Gamma^{m}_{qp}(\tilde{g})-\Gamma^{n}_{mp}(\tilde{g})\Gamma^
{m}_{qn}(\tilde{g})).
\end{align*}
By applying the $\delta_{p}$ derivative we have
\begin{align} \label{34}
-2\tilde{F}^{2}\mathcal{R}ic(\tilde{g})=&y^{p}y^{q}\tilde{g}^{mn}(\delta_{p}\delta_{q}\tilde{g}_{mn}+\delta_{n}\delta_{m}
\tilde{g}_{pq}-\delta_{p}\delta_{m}\tilde{g}_{qn}-\delta_{n}\delta_{q}
\tilde{g}_{mp})\nonumber\\
&-y^{p}y^{q}(\delta_{n}\tilde{g}^{nm})(\delta_{q}\tilde{g}_{mp}+\delta_{p}\tilde{g}_{qm}
-\delta_{m}\tilde{g}_{pq})\nonumber\\
&+y^{p}y^{q}(\delta_{p}\tilde{g}^{mn})(\delta_{q}\tilde{g}_{mn}+\delta_{n}\tilde{g}_{qm}
-\delta_{m}\tilde{g}_{qn})\nonumber\\
&-2y^{p}y^{q}(\Gamma^{n}_{mn}(\tilde{g})\Gamma^{m}_{qp}(\tilde{g})-\Gamma^{n}_{mp}(\tilde{g})
\Gamma^{m}_{qn}(\tilde{g})).
\end{align}
Substituting (\ref{31}) and (\ref{34}) in (\ref{2800}), we obtain
\begin{align} \label{36}
y^{p}y^{q}\frac{\partial}{\partial t}\tilde{g}_{pq}(t)=&y^py^q\tilde{g}^{mn}\delta_{n}\delta_{m}\tilde{g}_{pq}\\
&-y^{p}y^{q}(\delta_{n}\tilde{g}^{nm})(\delta_{q}\tilde{g}_{mp}+\delta_{p}\tilde{g}_{qm}
-\delta_{m}\tilde{g}_{pq})\nonumber\\
&+y^{p}y^{q}(\delta_{p}\tilde{g}^{mn})(\delta_{q}\tilde{g}_{mn}+\delta_{n}\tilde{g}_{qm}
-\delta_{m}\tilde{g}_{qn})\nonumber\\
&-2y^{p}y^{q}(\Gamma^{n}_{mn}(\tilde{g})\Gamma^{m}_{qp}(\tilde{g})-\Gamma^{n}_{mp}(\tilde{g})
\Gamma^{m}_{qn}(\tilde{g}))\nonumber\\
&+y^py^q\tilde{g}_{ql}\tilde{g}^{mn}(\delta_{p}\tilde{g}^{ls})
(\delta_{n}\tilde{g}_{sm}-\delta_{s}\tilde{g}_{mn}+\delta_{m}\tilde{g}_{ns})\nonumber\\
&-2y^py^q\tilde{g}_{ql}(\delta_{p}\tilde{g}^{mn})(\Gamma(h)^{l}_{mn}-\Gamma(\tilde{g})^{l}_{mn})\nonumber\\
&-2y^py^q\tilde{g}_{ql}\tilde{g}^{mn}\delta_{p}(\Gamma(h)^{l}_{mn})-2y^py^q\tilde{g}_{ql}\Gamma
(\tilde{g})^{l}_{pw}\xi^{w}\nonumber\\
&-2y^{p}\tilde{g}_{pq}\mathcal{L}_{\xi}y^{q}.\nonumber
\end{align}
Using Euler's theorem yields
\begin{equation} \label{RE3}
y^{p}y^{q}\frac{\partial^{2}\tilde{g}_{pq}}{\partial y^{n}\partial y^{m}}=\frac{\partial ^{2}}{\partial y^{n}\partial y^{m}}(y^{p}y^{q}\tilde{g}_{pq})-2\tilde{g}_{nm}=0.
\end{equation}
In order to get a strictly parabolic system, by virtue of (\ref{RE3}) we add the zero term $\tilde{F}^2 y^{p}y^{q}\tilde{g}^{mn}\frac{\partial^{2}\tilde{g}_{pq}}{\partial y^{n}\partial y^{m}}=0$  to the right hand side of (\ref{36}). Therefore, we have
\begin{align} \label{36+11}
y^{p}y^{q}\Big(&\frac{\partial}{\partial t}\tilde{g}_{pq}(t)-\tilde{g}^{mn}\delta_{n}\delta_{m}\tilde{g}_{pq}-\tilde{F}^2 \tilde{g}^{mn}\frac{\partial^{2}\tilde{g}_{pq}}{\partial y^{n}\partial y^{m}}\\
&+(\delta_{n}\tilde{g}^{nm})(\delta_{q}\tilde{g}_{mp}+\delta_{p}\tilde{g}_{qm}
-\delta_{m}\tilde{g}_{pq})\nonumber\\
&-(\delta_{p}\tilde{g}^{mn})(\delta_{q}\tilde{g}_{mn}+\delta_{n}\tilde{g}_{qm}
-\delta_{m}\tilde{g}_{qn})\nonumber\\
&+2(\Gamma^{n}_{mn}(\tilde{g})\Gamma^{m}_{qp}(\tilde{g})-\Gamma^{n}_{mp}(\tilde{g})
\Gamma^{m}_{qn}(\tilde{g}))\nonumber\\
&-\tilde{g}_{ql}\tilde{g}^{mn}(\delta_{p}\tilde{g}^{ls})
(\delta_{n}\tilde{g}_{sm}-\delta_{s}\tilde{g}_{mn}+\delta_{m}\tilde{g}_{ns})\nonumber\\
&+2\tilde{g}_{ql}(\delta_{p}\tilde{g}^{mn})(\Gamma(h)^{l}_{mn}-\Gamma(\tilde{g})^{l}_{mn})\nonumber\\
&+2\tilde{g}_{ql}\tilde{g}^{mn}\delta_{p}(\Gamma(h)^{l}_{mn})-2y^py^q\tilde{g}_{ql}\Gamma
(\tilde{g})^{l}_{pw}\xi^{w}+2\frac{l_{q}}{F}\tilde{g}_{np}\mathcal{L}_{\xi}y^{n}\Big)=0.\nonumber
\end{align}
On the other hand, applying  twice the vector field  $\frac{\delta}{\delta x^{n}}$ on the components of metric tensor  $\tilde{g}_{pq}$ yields
\begin{align*} \label{RE1}
\delta_{n}\delta_{m}\tilde{g}_{pq}=&\frac{\partial^{2}\tilde{g}_{pq}}{\partial x^{n}\partial x^{m}}-\frac{\partial N^{k}_{m}}{\partial x^{n}}\frac{\partial \tilde{g}_{pq}}{\partial y^{k}}-N^{k}_{m}\frac{\partial^{2}\tilde{g}_{pq}}{\partial x^{n}\partial y^{k}}-N^{l}_{n}\frac{\partial^{2}\tilde{g}_{pq}}{\partial y^{l}\partial x^{m}}\nonumber\\
&+N^{l}_{n}\frac{\partial N_{m}^{k}}{\partial y^{l}}\frac{\tilde{g}_{pq}}{\partial y^{k}}+
N^{l}_{n}N_{m}^k\frac{\partial^2\tilde{g}_{pq}}{\partial y^k\partial y^l}.\nonumber
\end{align*}
Convecting the last equation with $y^{p}y^{q}$ and using (\ref{RE3}) we have
\begin{equation*} 
y^{p}y^{q}\delta_{n}\delta_{m}\tilde{g}_{pq}
=y^{p}y^{q}\frac{\partial^{2}\tilde{g}_{pq}}{\partial x^{n}\partial x^{m}}.
\end{equation*}
Remark that, in the term $y^py^q\tilde{g}^{mn}\delta_{n}\delta_{m}\tilde{g}_{pq}$ in (\ref{36}), there is no term containing derivatives of $\tilde{g}$ except $y^{p}y^{q}\frac{\partial^{2}\tilde{g}_{pq}}{\partial x^{n}\partial x^{m}}$. One can rewrite (\ref{36+11}) as follows
\begin{align} \label{36+111}
y^{p}y^{q}\Big(\frac{\partial}{\partial t}\tilde{g}_{pq}(t)-\tilde{g}^{mn}\delta_{n}\delta_{m}\tilde{g}_{pq}-\tilde{F}^2 \tilde{g}^{mn}\frac{\partial^{2}\tilde{g}_{pq}}{\partial y^{n}\partial y^{m}}
+\textrm{lower order terms}\Big)=0.
\end{align}
Recall that, $M$ is a 2-dimensional Finsler surface and hence is isotropic. Thus, $R^{\,\,n}_{q\,\,np}$ can be part of a symmetric quadratic form, namely, it is symmetric with respect to the indices $p$ and $q$, see \cite[p.\ 152]{HAZ}. Therefore, by means of symmetry of $\mathcal{L}_{\xi}(y^{p}y^{q}\tilde{g}_{pq})$ with respect to the indices $p$ and $q$ we conclude that (\ref{36+111}) is symmetric with respect to the indices $p$ and $q$. If a symmetric bilinear form vanishes on the diagonal, then by the polarization identity it vanishes identically. Therefore, from (\ref{36+111}) we have
\begin{equation} \label{36+1111}
\frac{\partial}{\partial t}\tilde{g}_{pq}(t)-\tilde{g}^{mn}\delta_{n}\delta_{m}\tilde{g}_{pq}-\tilde{F}^2 \tilde{g}^{mn}\frac{\partial^{2}\tilde{g}_{pq}}{\partial y^{n}\partial y^{m}}
+\textrm{lower order terms}=0.
\end{equation}
By restricting the metric tensor $\tilde{g}$ on $p^{*}TM$ and using Lemma \ref{mm} we can rewrite (\ref{36+1111}) in terms of the basis $\{\hat{e}_1,\hat{e}_{2},\hat{e}_3\}$ on $SM$ as follows
\begin{equation}\label{asli}
\frac{\partial}{\partial t}\tilde{g}_{pq}=\tilde{g}^{ab}\hat{e}_{b}\hat{e}_{a}\tilde{g}_{pq}+\tilde{g}^{11}
\hat{e}_{3}\hat{e}_{3}\tilde{g}_{pq}-B^c\hat{e}_c\tilde{g}_{pq}-D^1\hat{e}_{3}\tilde{g}_{pq}+\textrm{lower order terms},
\end{equation}
where, $B^c:=v^c_i\tilde{g}^{ab}\hat{e}_{b}(u^i_a)$ and $D^1:=v^1_i\tilde{F}\tilde{g}^{11}\hat{e}_{3}u^i_{1}$ as mentioned in Lemma \ref{mm} and all the indices in (\ref{asli}) run over the range $1,2$.

By assumption $M$ is compact and the sphere bundle $SM$ as well. Also, the metric tensor $\tilde{g}^{mn}$ remains positive definite along the Ricci flow, see \cite{YB2}, Corollary 3.7. Since the coefficients of principal (second) order terms of (\ref{asli}) are  positive definite, by Definition \ref{semipar}, it is a semi-linear strictly parabolic system on $SM$. Therefore, the standard existence and uniqueness theorem for parabolic systems on compact domains implies that, (\ref{asli}) has a unique solution on $SM$. Equation (\ref{asli}) is a special case of the general flow (\ref{AR}) and $\tilde{g}(t)$ is a solution to it. Therefore, by means of Lemma \ref{RE2}, $\tilde{g}(t)$ satisfies the integrability condition or equivalently, there exists a Finsler structure $\tilde{F}(t)$ on $TM$ such that $\tilde{g}_{ij}=\frac{1}{2}\frac{\partial^2 \tilde{F}}{\partial y^i\partial y^j}$.
Hence, $\tilde{g}$ is a Finsler metric and determines a Finsler structure  $\tilde{F}^{2}:=\tilde{g}_{pq}y^{p}y^{p}$ which is a unique solution to the Finsler Ricci-DeTurck flow. This completes the proof of Theorem \ref{main8}.\hspace{\stretch{1}}$\Box$

\section{Short time solution to the Ricci flow on Finsler surfaces}
In this section, we will show that there is a one-to-one correspondence between the solutions to Ricci flow and Ricci-DeTurck flow on Finsler surfaces. Here, we recall some results which will be used in the sequel.
\begin{Lemma} \label{main9}
\cite[p.\ 82]{Chow1} Let $\{X_{t}:0\leq t<T\leq \infty\}$ be a continuous time-dependent family of vector fields on a compact manifold $M$, then there exists a one-parameter family of diffeomorphisms $\{\varphi_{t}:M \longrightarrow M;\quad 0 \leq t<T \leq \infty\}$ defined on the same time interval such that
\begin{equation*}
\left\{
\begin{array}{l}
\frac{\partial}{\partial t} \varphi_{t}(x)=X_{t}[\varphi_{t}(x)],\cr
\varphi_{0}(x)=x,
\end{array}
\right.
\end{equation*}
for all $x\in M$ and $t\in[0,T)$.
\end{Lemma}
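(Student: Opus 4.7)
The plan is to reduce the statement to the classical existence--uniqueness theory for time-dependent ordinary differential equations on a smooth manifold. First I would fix an arbitrary $x_{0}\in M$ and work in a coordinate chart around it. In such a chart, $X_{t}$ is represented by a map $(t,x)\mapsto X_{t}(x)\in\mathbb{R}^{n}$ that is continuous in $t$ and $C^{\infty}$ in $x$, with locally uniform Lipschitz bounds coming from smoothness of $X$ and compactness of $\overline{U(x_{0})}$. The Picard--Lindel\"of theorem then produces, on some product $[0,\delta(x_{0}))\times U(x_{0})$, a unique solution $\varphi_{t}(x)$ of
\[
\tfrac{\partial}{\partial t}\varphi_{t}(x)=X_{t}\bigl[\varphi_{t}(x)\bigr],\qquad \varphi_{0}(x)=x,
\]
depending smoothly on the initial condition.

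Next, I would globalize these local flows using compactness of $M$. Covering $M$ by finitely many coordinate neighbourhoods $U(x_{\alpha})$ and taking the infimum $\delta_{0}$ of the associated local existence times $\delta(x_{\alpha})$ yields a uniform positive time on which $\varphi_{t}$ is defined as a smooth self-map of $M$. A standard continuation argument then shows that for each initial point the maximal integral curve either extends to all of $[0,T)$ or leaves every compact subset of $M$ in finite time; since $M$ itself is compact, the second alternative is impossible, so $\varphi_{t}\colon M\to M$ is well defined and smooth for every $t\in[0,T)$.

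Finally, to obtain the diffeomorphism property, I would construct an inverse by reversing time. For each fixed $t\in[0,T)$, I would solve the ODE associated to the time-dependent vector field $\tilde{X}_{s}:=-X_{t-s}$ starting from the identity at $s=0$; uniqueness of solutions then gives $\psi_{t}\circ\varphi_{t}=\mathrm{Id}_{M}=\varphi_{t}\circ\psi_{t}$, and smooth dependence on initial data on both sides makes $\varphi_{t}$ a $C^{\infty}$ diffeomorphism. The main obstacle I would anticipate is precisely the globalization step: one must rule out finite-time blow-up of integral curves inside $(0,T)$, and it is exactly the compactness of $M$ that excludes this pathology. On a non-compact base one would need additional growth hypotheses on $X_{t}$, which is why the lemma is stated with $M$ compact and will interface correctly with the compact Finsler surface setting used in the subsequent proof of Theorem \ref{main14}.
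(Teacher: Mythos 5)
The paper offers no proof of this lemma at all: it is quoted verbatim from Chow--Knopf \cite[p.\ 82]{Chow1} and used as a black box. Your argument (Picard--Lindel\"of in charts, globalization of the integral curves via compactness of $M$ to rule out finite-time escape, and inversion of $\varphi_{t}$ by flowing the reversed field $\tilde{X}_{s}=-X_{t-s}$ together with smooth dependence on initial data) is correct and is precisely the standard proof given in the cited reference, so there is nothing to compare beyond noting that you have supplied the details the paper delegates to the literature.
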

\begin{rem}\label{remark1}
Let $M$ be a compact Finsler surface. According to Lemma \ref{main9} there exists a unique one-parameter family of diffeomorphisms $\tilde{\varphi}_{t}$ on $SM$, such that
\begin{equation*}
\left\{
\begin{array}{l}
\frac{\partial}{\partial t}\varphi_{t}(z)=\xi(\varphi_{t}(z),t),\cr
\varphi_{0}=Id_{SM},
\end{array}
\right.
\end{equation*}
where, $z=(x,[y])\in SM$ and $t\in[0,T)$.
\end{rem}
\begin{rem}\label{remark2}
Let $\tilde{g}_{pq}$ be a solution to the Ricci-DeTurck flow and $\varphi_{t}$ the one-parameter global group of diffeomorphisms according to the vector field $\xi$. Since $\xi$ is a vector field on $SM$, then  $\varphi_{t}$ are homogeneous of degree zero. Zero-homogeneity of $\tilde{g}_{pq}$ implies that $\varphi_{t}^{*}(\tilde{g}_{pq})$ be also homogeneous of degree zero. In fact,
\begin{equation*}
(\varphi_{t}^{*}\tilde{g}_{pq})(x,\lambda y)=\tilde{g}_{pq}(\varphi_{t}(x,\lambda y))=\tilde{g}_{pq}(\varphi_{t}(x,y))=
(\varphi_{t}^{*}\tilde{g}_{pq})(x,y).
\end{equation*}
Using the fact that $\tilde{g}_{pq}$ is positive definite and $\varphi_{t}^{*}$ are diffeomorphisms, $\varphi_{t}^{*}(\tilde{g}_{pq})$ is also positive definite. As well $\varphi_{t}^{*}(\tilde{g}_{pq})$ is symmetric. More intuitively,
\begin{equation*}
(\varphi_{t}^{*}\tilde{g})(X,Y)=g(\varphi_{t_{_*}}(X),\varphi_{t_{_*}}(Y))=g(\varphi_{t_{_*}}(Y),\varphi_{t_{_*}}(X))=(\varphi_{t}^{*}\tilde{g})(Y,X).
\end{equation*}
Therefore, $\varphi_{t}^{*}(\tilde{g}_{pq})$ determines a Finsler structure as follows
\begin{equation*}
F^{2}:=g_{pq}\tilde{y}^{p}\tilde{y}^{q},
\end{equation*}
where, $g_{pq}:=\varphi_{t}^{*}(\tilde{g}_{pq})$ and $\varphi_{t}^{*}y^p:=\tilde{y}^{p}$.
\end{rem}
\begin{lem}
Let $\varphi_{t}$ be a global one parameter group of diffeomorphisms corresponding to the vector field $\xi$ and $(\gamma^{i}_{jk})_{\tilde{g}}$ and $(G^{i})_{\tilde{g}}$ are the Christoffel symbols and spray coefficients related to the Finsler metric $\tilde{g}$, respectively. Then we have
\begin{align}
&\varphi_{t}^{*}((\gamma_{jk}^{i})_{\tilde{g}})=(\gamma_{jk}^{i})_{\varphi_{t}^{*}(\tilde{g})},\label{Christoffel}\\
&\varphi_{t}^{*}(G^{i}_{\tilde{g}})=G^{i}_{\varphi_{t}^{*}(\tilde{g})},\label{spray}
\end{align}
where, $(\gamma_{jk}^{i})_{\tilde{g}}=\tilde{g}^{is}\frac{1}{2}(\frac{\partial \tilde{g}_{sj}}{\partial x^k}-\frac{\partial \tilde{g}_{jk}}{\partial x^s}+\frac{\partial \tilde{g}_{ks}}{\partial x^j})$ and $G^{i}_{\tilde{g}}=\frac{1}{2}(\gamma^{i}_{jk})_{\tilde{g}}y^jy^k$.
\end{lem}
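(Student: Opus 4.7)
The plan is to verify both identities by direct chain-rule computation, exploiting the fact that $\gamma^i_{jk}$ and $G^i$ are natural differential operators built from a Finsler metric, and that $\varphi_t$, being a genuine diffeomorphism (on $TM$, and on $SM$ by zero-homogeneity of $\xi$), commutes with every such intrinsic construction. This is the same circle of ideas already used to establish Lemma \ref{main2}, so most of the heavy lifting has really been done there; here one only needs to specialize it to the three-term Christoffel symbol and then contract with $y^jy^k$.

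For \eqref{Christoffel}, I would introduce the pulled-back coordinates $\tilde{x}^i := \varphi_t^* x^i$ and $\tilde{y}^i := \varphi_t^* y^i$ and write out $(\varphi_t^*\tilde{g})_{pq}(\tilde{x},\tilde{y}) = \tilde{g}_{pq}(\varphi_t(\tilde{x},\tilde{y}))$. Applying $\partial/\partial\tilde{x}^k$ to this identity via the chain rule produces Jacobian factors of $\varphi_t$. When these derivatives are assembled into the three-term combination that defines $\gamma^i_{jk}(\varphi_t^*\tilde{g})$ and then contracted with the inverse $(\varphi_t^*\tilde{g})^{is} = \tilde{g}^{is}\circ\varphi_t$, the Jacobian factors cancel in exactly the standard way, leaving precisely $\gamma^i_{jk}(\tilde{g})\circ\varphi_t = \varphi_t^*(\gamma^i_{jk}(\tilde{g}))$. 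This is the same cancellation that appears in the proof of Lemma \ref{main2}, restricted to the first-order piece.

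For \eqref{spray}, I would simply use the definition $G^i_{\tilde{g}} = \tfrac{1}{2}\gamma^i_{jk}(\tilde{g})\,y^jy^k$, the multiplicativity of pullback, and \eqref{Christoffel}:
\[
\varphi_t^*(G^i_{\tilde{g}}) \;=\; \tfrac{1}{2}\,\varphi_t^*\!\left(\gamma^i_{jk}(\tilde{g})\right)\,\varphi_t^*(y^j)\,\varphi_t^*(y^k) \;=\; \tfrac{1}{2}\,\gamma^i_{jk}(\varphi_t^*\tilde{g})\,\tilde{y}^j\tilde{y}^k \;=\; G^i_{\varphi_t^*\tilde{g}},
\]
where in the last equality one reads off the definition of $G^i$ applied to the pulled-back metric in the $(\tilde{x},\tilde{y})$-coordinates. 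The main obstacle, such as it is, lies in being careful that $\varphi_t$ acts on the total space $TM$ (not on $M$), so that $y^i$ is genuinely transformed and the chain-rule expansion of $\partial/\partial\tilde{x}^k$ on a pulled-back function involves target partials in both $x$ and $y$. The zero-homogeneity of $\xi$ and $\tilde{g}_{ij}$, together with the fact that $\varphi_t$ preserves rays, ensures that these extra vertical contributions drop out of the Christoffel combination and that the result descends consistently to $SM$.
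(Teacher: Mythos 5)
Your proposal is correct and follows essentially the same route as the paper: both arguments rest on the naturality of the pullback, namely that $\varphi_t^*$ distributes over products and inverses and that $\partial/\partial\tilde{x}^k$ applied to $\varphi_t^*(\tilde{g}_{pq})$ gives $\varphi_t^*(\partial\tilde{g}_{pq}/\partial x^k)$ in the pulled-back coordinates $(\tilde{x},\tilde{y})=(\varphi_t^*x,\varphi_t^*y)$, after which the spray identity follows by contracting with $\tilde{y}^j\tilde{y}^k$ exactly as in your last display. The only cosmetic difference is that the paper does not phrase the first step as a cancellation of Jacobian factors (in the pulled-back coordinates none appear), but the content is identical.
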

\begin{proof}
Let us denote $\varphi_{t}^{*}x^i=\tilde{x}^i$ and $\varphi_{t}^{*}y^i=\tilde{y}^i$. By definition, we have
\begin{align*}
\varphi_{t}^{*}((\gamma^{i}_{jk})_{\tilde{g}})&=\varphi_{t}^{*}(\tilde{g}^{is}\frac{1}{2}(\frac{\partial \tilde{g}_{sj}}{\partial x^k}-\frac{\partial \tilde{g}_{jk}}{\partial x^s}+\frac{\partial \tilde{g}_{ks}}{\partial x^j}))\\
&=\varphi_{t}^{*}(\tilde{g}^{is})\varphi_{t}^{*}(\frac{1}{2}(\frac{\partial \tilde{g}_{sj}}{\partial x^k}-\frac{\partial \tilde{g}_{jk}}{\partial x^s}+\frac{\partial \tilde{g}_{ks}}{\partial x^j}))\\
&=\varphi_{t}^{*}(\tilde{g}^{is})\frac{1}{2}(\frac{\partial\varphi_{t}^{*}( \tilde{g}_{sj})}{\partial \tilde{x}^k}-\frac{\partial\varphi_{t}^{*}( \tilde{g}_{jk})}{\partial \tilde{x}^s}+\frac{\partial\varphi_{t}^{*}(\tilde{g}_{ks})}{\partial \tilde{x}^j})\\
&=(\gamma_{jk}^{i})_{\varphi_{t}^{*}(\tilde{g})}.
\end{align*}
Next, by means of (\ref{Christoffel}) we have
\begin{align*}
\varphi_{t}^{*}(G^{i}_{\tilde{g}})&=\varphi_{t}^{*}(\frac{1}{2}(\gamma^{i}_{jk})_{\tilde{g}}y^jy^k)=\frac{1}{2}\varphi_{t}^{*}((\gamma^{i}_{jk})_{\tilde{g}})\varphi_{t}^{*}y^j\varphi_{t}^{*}y^k\\
&=\frac{1}{2}(\gamma^{i}_{jk})_{\varphi_{t}^{*}(\tilde{g})}\tilde{y}^{j}\tilde{y}^{k}=G^{i}_{\varphi_{t}^{*}(\tilde{g})}.
\end{align*}
This completes the proof.
\end{proof}
\begin{lem}\label{lemmohem}
Let $\varphi_{t}$ be a global one parameter group of diffeomorphisms generating the vector field $\xi$ and $\mathcal{R}ic_{\tilde{g}}$ the Ricci scalar related to the Finsler metric $\tilde{g}$, then we have
\begin{equation*}
\varphi_{t}^{*}(\mathcal{R}ic_{\tilde{g}})=\mathcal{R}ic_{\varphi_{t}^{*}(\tilde{g})}.
\end{equation*}
\end{lem}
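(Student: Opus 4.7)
The plan is to reduce the statement to the naturality of spray coefficients, already established as (\ref{spray}) in the preceding lemma. The key observation is that, via the formula (\ref{18}) for the reduced $hh$-curvature, the Ricci scalar is built entirely from $G^i$, its first and second $x$- and $y$-partial derivatives, the coordinates $y^j$, and a factor of $1/F^2$. Since each of these ingredients transforms covariantly under $\varphi_t^{*}$, the full expression does as well.

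First I would recall that, by definition, $\mathcal{R}ic_{\tilde g}=g^{ik}(\tilde g)\,R_{ik}(\tilde g)=R^{i}{}_{\!i}(\tilde g)$, where
$$R^{i}{}_{\!k}(\tilde g)=\frac{1}{\tilde F^{2}}\!\left(2\frac{\partial G^{i}_{\tilde g}}{\partial x^{k}}-\frac{\partial^{2}G^{i}_{\tilde g}}{\partial x^{j}\partial y^{k}}y^{j}+2G^{j}_{\tilde g}\frac{\partial^{2}G^{i}_{\tilde g}}{\partial y^{j}\partial y^{k}}-\frac{\partial G^{i}_{\tilde g}}{\partial y^{j}}\frac{\partial G^{j}_{\tilde g}}{\partial y^{k}}\right),$$
and that $\tilde F^{2}=\tilde g_{pq}y^{p}y^{q}$. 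Thus $\mathcal{R}ic_{\tilde g}$ depends only on $G^{i}_{\tilde g}$, $\tilde g_{pq}$, and $y^{i}$, together with algebraic operations and partial derivatives in the coordinates $(x^{i},y^{i})$.

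Second I would apply $\varphi_{t}^{*}$ termwise. Because $\varphi_{t}$ is a diffeomorphism of $SM$, pullback is a ring homomorphism on smooth functions and, exactly as in the proof of (\ref{spray}), commutes with partial derivatives once we pass to the pulled-back coordinates $\tilde x^{i}=\varphi_{t}^{*}x^{i}$, $\tilde y^{i}=\varphi_{t}^{*}y^{i}$, i.e.\ $\varphi_{t}^{*}(\partial f/\partial x^{k})=\partial(\varphi_{t}^{*}f)/\partial\tilde x^{k}$ and similarly for $\partial/\partial y^{k}$. Combining these chain-rule identities with (\ref{spray}), with $\varphi_{t}^{*}y^{j}=\tilde y^{j}$, and with
$$\varphi_{t}^{*}(\tilde F^{2})=\varphi_{t}^{*}(\tilde g_{pq})\,\tilde y^{p}\tilde y^{q}=F^{2}_{\varphi_{t}^{*}(\tilde g)},$$
every factor in the formula for $R^{i}{}_{\!k}(\tilde g)$ gets converted into the corresponding factor built from $\varphi_{t}^{*}(\tilde g)$. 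Hence $\varphi_{t}^{*}(R^{i}{}_{\!k}(\tilde g))=R^{i}{}_{\!k}(\varphi_{t}^{*}(\tilde g))$, and contracting $k=i$ yields
$$\varphi_{t}^{*}(\mathcal{R}ic_{\tilde g})=\mathcal{R}ic_{\varphi_{t}^{*}(\tilde g)}.$$

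The only delicate point is the bookkeeping for the mixed partial $\partial^{2}G^{i}/\partial x^{j}\partial y^{k}$ under $\varphi_{t}^{*}$: since $\varphi_{t}$ is a diffeomorphism of $SM$ rather than a lift of a diffeomorphism of $M$, the new coordinates $(\tilde x^{i},\tilde y^{i})$ can mix $x$ and $y$, so one must be careful that partial derivatives transform as used. However, this is precisely the same manipulation already validated in the proof of (\ref{Christoffel})-(\ref{spray}); iterating it once more on $G^{i}_{\tilde g}$ (already a pulled-back scalar by that lemma) requires no new input. Everything else is routine substitution.
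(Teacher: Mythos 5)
Your proposal is correct and follows essentially the same route as the paper's own proof: express $\mathcal{R}ic$ through the reduced $hh$-curvature formula (\ref{18}), apply $\varphi_{t}^{*}$ termwise using the naturality of the spray coefficients (\ref{spray}) and the conversion of partial derivatives to the pulled-back coordinates, then contract $i=k$. Your closing remark about the mixed partial $\partial^{2}G^{i}/\partial x^{j}\partial y^{k}$ flags a chain-rule subtlety that the paper leaves implicit, but it does not change the argument.
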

\begin{proof}
Let us consider the \emph{reduced hh-curvature tensor} $R^{i}_{\,\,k}$ which is expressed entirely in terms of the $x$ and $y$ derivatives of the spray coefficients $G^{i}_{\tilde{g}}$.
\begin{equation*}
(R^{i}_{\,\,k})_{\tilde{g}}:=\frac{1}{\tilde{F}^2}(2\frac{\partial G^{i}_{\tilde{g}}}{\partial x^{k}}-\frac{\partial^{2}G^{i}_{\tilde{g}}}{\partial x^j\partial y^k}y^{j}+2G^{j}_{\tilde{g}}\frac{\partial^{2}G^{i}_{\tilde{g}}}{\partial y^j\partial y^k}-\frac{\partial G^{i}_{\tilde{g}}}{\partial y^{j}}\frac{\partial G^{j}_{\tilde{g}}}{\partial y^{k}}).
\end{equation*}
Therefore, we have
\begin{align*}
\varphi_{t}^{*}((R^{i}_{\,\,k})_{\tilde{g}})&=\varphi_{t}^{*}
(\frac{1}{\tilde{F}^2}(2\frac{\partial G^{i}_{\tilde{g}}}{\partial x^{k}}-\frac{\partial^{2}G^{i}_{\tilde{g}}}{\partial x^j\partial y^k}y^{j}+2G^{j}_{\tilde{g}}\frac{\partial^{2}G^{i}_{\tilde{g}}}{\partial y^j\partial y^k}-\frac{\partial G^{i}_{\tilde{g}}}{\partial y^{j}}\frac{\partial G^{j}_{\tilde{g}}}{\partial y^{k}}))\nonumber\\
&=\varphi_{t}^{*}(\frac{1}{\tilde{F}^2})\varphi_{t}^{*}(2\frac{\partial G^{i}_{\tilde{g}}}{\partial x^{k}}-\frac{\partial^{2}G^{i}_{\tilde{g}}}{\partial x^j\partial y^k}y^{j}+2G^{j}_{\tilde{g}}\frac{\partial^{2}G^{i}_{\tilde{g}}}{\partial y^j\partial y^k}-\frac{\partial G^{i}_{\tilde{g}}}{\partial y^{j}}\frac{\partial G^{j}_{\tilde{g}}}{\partial y^{k}}).
\end{align*}
Thus, we get
\begin{align*}
\varphi_{t}^{*}((R^{i}_{\,\,k})_{\tilde{g}})=&\frac{1}{\varphi_{t}^{*}(\tilde{F}^2)}(2\frac{\partial(\varphi_{t}^{*} (G^{i}_{\tilde{g}}))}{\partial \tilde{x}^{k}}-\frac{\partial^{2}(\varphi_{t}^{*}(G^{i}_{\tilde{g}}))}{\partial \tilde{x}^j\partial \tilde{y}^k}\tilde{y}^{j}\nonumber\\
&+2\varphi_{t}^{*}(G^{j}_{\tilde{g}})\frac{\partial^{2}(\varphi_{t}^{*}(G^{i}_{\tilde{g}}))}{\partial \tilde{y}^j\partial \tilde{y}^k}-\frac{\partial(\varphi_{t}^{*}( G^{i}_{\tilde{g}}))}{\partial \tilde{y}^{j}}\frac{\partial (\varphi_{t}^{*}(G^{j}_{\tilde{g}}))}{\partial \tilde{y}^{k}}).
\end{align*}
Putting $i=k$ in this equation together with (\ref{spray}) implies
\begin{equation*}
\varphi_{t}^{*}(\mathcal{R}ic_{\tilde{g}})=\mathcal{R}ic_{\varphi_{t}^{*}(\tilde{g})},
\end{equation*}
as we have claimed.
\end{proof}
Now we are in a position to prove the following proposition.
\begin{prop} \label{main11}
Fix a compact Finsler surface $(M,\bar{F})$ with related Finsler metric tensor $h$. Let $\tilde{F}(t)$ be a family of solutions to the Ricci-DeTurck flow
\begin{eqnarray} \label{46}
\frac{\partial}{\partial t}\tilde{F}^{2}(t)=-2\tilde{F}^{2}(t)\mathcal{R}ic(\tilde{g}(t))-\mathcal{L}_{\xi}\tilde{F}^{2}(t),
\end{eqnarray}
where, $\xi=(\Phi_{\tilde{g}(t),h}Id)^{i}\frac{\partial}{\partial x^i}$ and $t\in[0,T)$. Moreover, let $\varphi_{t}$ be a one-parameter family of diffeomorphisms satisfying
\begin{eqnarray*} 
\frac{\partial}{\partial t}\varphi_{t}(z)=\xi(\varphi_{t}(z),t),\nonumber
\end{eqnarray*}
for $z\in SM$ and $t\in[0,T)$. Then the Finsler structures $F(t)$ form a solution to the Finslerian Ricci flow (\ref{20}) where, $F(t)$ is defined by
\begin{eqnarray*} 
F^{2}(t):=g_{pq}\tilde{y}^{p}\tilde{y}^{q}=\varphi_{t}^{*}(\tilde{F}^{2}(t)).\nonumber
\end{eqnarray*}
where, $g_{pq}:=\varphi_{t}^{*}(\tilde{g}_{pq})$ and $\varphi_{t}^{*}y^p:=\tilde{y}^{p}$.
\end{prop}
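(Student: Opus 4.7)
The plan is to differentiate the defining identity $F^{2}(t)=\varphi_{t}^{*}\tilde{F}^{2}(t)$ in $t$, use the Ricci-DeTurck equation (\ref{46}) to substitute for $\partial_{t}\tilde{F}^{2}$, and observe that the Lie-derivative term produced by the moving diffeomorphism exactly cancels the $-\mathcal{L}_{\xi}\tilde{F}^{2}$ term in (\ref{46}). What remains is then identified with $-2F^{2}\mathcal{R}ic(g)$ via Lemma \ref{lemmohem}. This is the standard DeTurck trick transferred to the Finsler setting.

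First I would apply the standard formula for the time derivative of a $t$-dependent pullback along a flow. Since $\varphi_{t}$ is the flow of $\xi$ by Remark \ref{remark1} and both $\tilde{g}(t)$ and hence $\tilde{F}^{2}(t)$ are smooth in $t$, one has
\begin{equation*}
\frac{\partial}{\partial t}\bigl(\varphi_{t}^{*}\tilde{F}^{2}(t)\bigr)
= \varphi_{t}^{*}\!\left(\frac{\partial \tilde{F}^{2}}{\partial t}(t)\right)
+ \varphi_{t}^{*}\bigl(\mathcal{L}_{\xi}\tilde{F}^{2}(t)\bigr).
\end{equation*}
Substituting (\ref{46}) for $\partial_{t}\tilde{F}^{2}$ and observing the immediate cancellation,
\begin{equation*}
\frac{\partial}{\partial t}F^{2}(t)
=\varphi_{t}^{*}\!\bigl(-2\tilde{F}^{2}\mathcal{R}ic(\tilde{g})-\mathcal{L}_{\xi}\tilde{F}^{2}\bigr)
+\varphi_{t}^{*}\bigl(\mathcal{L}_{\xi}\tilde{F}^{2}\bigr)
=-2\,\varphi_{t}^{*}\!\bigl(\tilde{F}^{2}\mathcal{R}ic(\tilde{g})\bigr).
\end{equation*}
To conclude, I would use multiplicativity of the pullback on scalar functions together with Lemma \ref{lemmohem}, which gives $\varphi_{t}^{*}(\mathcal{R}ic(\tilde{g}))=\mathcal{R}ic(g)$, yielding
\begin{equation*}
\frac{\partial}{\partial t}F^{2}(t)=-2\,F^{2}(t)\,\mathcal{R}ic(g(t)),
\end{equation*}
which is exactly the Finslerian Ricci flow (\ref{20}) after dividing by $2F^{2}$. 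The initial condition $F(0)=\tilde{F}(0)=F_{0}$ follows from $\varphi_{0}=Id_{SM}$.

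The only substantive point is justifying the time-derivative-of-pullback formula in the Finsler setting. The vector field $\xi$ is a priori defined on $SM$ rather than on $M$, whereas $\tilde{F}^{2}$ lives on $TM_{0}$ and is positively two-homogeneous in $y$. One must verify that $\varphi_{t}$ lifts unambiguously from $SM$ to $TM_{0}$ by homogeneity so that the classical Lie-derivative identity continues to hold for the scalar function $\tilde{F}^{2}$; this is essentially the content of Remark \ref{remark2}, where the zero-homogeneity of $\varphi_{t}^{*}\tilde{g}_{pq}$ and its positive-definiteness are checked. Once that compatibility is in place, the proof reduces to the short algebraic cancellation above together with Lemma \ref{lemmohem}; no further analytic input is required.
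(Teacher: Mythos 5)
Your proposal is correct and follows essentially the same route as the paper: differentiate $F^{2}(t)=\varphi_{t}^{*}\tilde{F}^{2}(t)$, apply the derivative-of-pullback formula so the Lie-derivative term cancels the $-\mathcal{L}_{\xi}\tilde{F}^{2}$ in the Ricci--DeTurck equation, and invoke Lemma \ref{lemmohem} to identify $\varphi_{t}^{*}(\mathcal{R}ic(\tilde{g}))$ with $\mathcal{R}ic(g)$. The only cosmetic difference is that the paper writes the transport term as $\mathcal{L}_{(\varphi_{t}^{-1})_{*}\xi}\varphi_{t}^{*}(\tilde{F}^{2})$ before recognizing it as $\varphi_{t}^{*}(\mathcal{L}_{\xi}\tilde{F}^{2})$, and works throughout with $\log F$ rather than $F^{2}$.
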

\begin{proof}
In order to show $F(t)$ form a solution to the Finslerian Ricci flow (\ref{20}) we have to show $\frac{\partial}{\partial t}(\log F(t))=-\mathcal{R}ic.$
Derivation of $F^{2}(t)=\varphi_{t}^{*}(\tilde{F}^{2}(t))$ with respect to the parameter $t$, leads to
\begin{equation} \label{50}
\frac{\partial}{\partial t}(\log F(t))=\frac{1}{2}\frac{\frac{\partial}{\partial t}(\varphi_{t}^{*}(\tilde{F}^{2}(t)))}{\varphi_{t}^{*}(\tilde{F}^{2}(t))}.
\end{equation}
The term $\frac{\partial}{\partial t}(\varphi_{t}^{*}\tilde{F}^{2}(t))$ becomes
\begin{eqnarray} \label{51}
\frac{\partial}{\partial t}(\varphi_{t}^{*}\tilde{F}^{2}(t))&=&\frac{\partial}{\partial s}(\varphi_{s+t}^{*}(\tilde{F}^{2}(s+t)))\mid_{s=0}\\
&=&\varphi_{t}^{*}(\frac{\partial}{\partial t}\tilde{F}^{2}(t))+\frac{\partial}{\partial s}(\varphi_{s+t}^{*}(\tilde{F}^{2}(t)))\mid_{s=0}\nonumber \\
&=&\varphi_{t}^{*}(\frac{\partial}{\partial t}\tilde{F}^{2}(t))+\frac{\partial}{\partial s}((\varphi_{t}^{-1}\circ\hspace{0.1cm}\varphi_{t+s})^{*}(\varphi_{t}^{*}(\tilde{F}^{2}(t))))\mid_{s=0}\nonumber\\
&=&\varphi_{t}^{*}(\frac{\partial}{\partial t}\tilde{F}^{2}(t))+\mathcal{L}_{\frac{\partial}{\partial s}(\varphi_{t}^{-1}\circ\hspace{0.1cm}\varphi_{t+s})\mid_{s=0}}\varphi_{t}^{*}(\tilde{F}^{2}(t)).\nonumber
\end{eqnarray}
On the other hand, we have
\begin{eqnarray*} 
\frac{\partial}{\partial s}(\varphi_{t}^{-1}\circ\hspace{0.1cm}\varphi_{t+s})\mid_{s=0}=(\varphi_{t}^{-1})_{*}((\frac{\partial}{\partial s}\varphi_{s+t})\mid_{s=0})=(\varphi_{t}^{-1})_{*}(\xi).
\end{eqnarray*}
Hence, (\ref{51}) is written
\begin{eqnarray*}
\frac{\partial}{\partial t}(\varphi_{t}^{*}\tilde{F}^{2}(t))=\varphi_{t}^{*}(\frac{\partial}{\partial t}\tilde{F}^{2}(t))+\mathcal{L}_{(\varphi_{t}^{-1})_{*}(\xi)}\varphi_{t}^{*}(\tilde{F}^{2}(t)).
\end{eqnarray*}
Replacing the last relation in (\ref{50}) and using the assumption (\ref{46}) we get
\begin{eqnarray*}
\frac{\partial}{\partial t}(\log F(t))&=&\frac{1}{2}\frac{\varphi_{t}^{*}(\frac{\partial}{\partial t}\tilde{F}^{2}(t))+\mathcal{L}_{(\varphi_{t}^{-1})_{*}(\xi)}
\varphi_{t}^{*}(\tilde{F}^{2}(t))}{\varphi_{t}^{*}(\tilde{F}^{2}(t))} \\
&=&\frac{1}{2}\frac{\varphi_{t}^{*}(-2\tilde{F}^{2}(t)\mathcal{R}ic(\tilde{F}(t))-\mathcal{L}_{\xi}\tilde{F}^{2}(t))+
\mathcal{L}_{(\varphi_{t}^{-1})_{*}(\xi)}\varphi_{t}^{*}(\tilde{F}^{2}(t))}{\varphi_{t}^{*}(\tilde{F}^{2}(t))}\\
&=&\frac{1}{2}\frac{\varphi_{t}^{*}(-2\tilde{F}^{2}(t)\mathcal{R}ic(\tilde{F}(t)))-\varphi_{t}^{*}(\mathcal{L}_{\xi}\tilde{F}^{2}(t)))
+\mathcal{L}_{(\varphi_{t}^{-1})_{*}(\xi)}\varphi_{t}^{*}(\tilde{F}^{2}(t))}{\varphi_{t}^{*}(\tilde{F}^{2}(t))}\\
&=&\frac{1}{2}\frac{-2\varphi_{t}^{*}(\tilde{F}^{2}(t))\varphi_{t}^{*}(\mathcal{R}ic(\tilde{F}(t)))}{\varphi_{t}^{*}(\tilde{F}^{2}(t))}.
\end{eqnarray*}
By virtue of Lemma \ref{lemmohem} we have
\begin{eqnarray*}
\frac{\partial}{\partial t}(\log F(t))
=-\varphi_{t}^{*}(\mathcal{R}ic(\tilde{F}(t)))
=-\mathcal{R}ic_{\varphi_{t}^{*}(\tilde{F}(t))}
=-\mathcal{R}ic_{F(t)}.
\end{eqnarray*}
Therefore, the Finsler structures $F(t)$ form a solution to the Finslerian Ricci flow. Hence the proof is complete.
\end{proof}
In the next step we assume that there is a solution to the Finslerian Ricci flow based on which we construct a solution to the Ricci-DeTurck flow in Finsler space.

\begin{prop}\label{main12}
Fix a compact Finsler surface $(M,\bar{F})$ with the related Finsler metric tensor $h$. Let $F(t)$, $t\in[0,T)$, be a family of solutions to the Ricci flow and $\varphi_{t}$ a one-parameter family of diffeomorphisms on $SM$ evolving under the following flow,
\begin{equation} \label{55}
\frac{\partial}{\partial t}\varphi_{t}=\Phi_{g(t),h}\varphi_{t}.\nonumber
\end{equation}
Then the Finsler structures $\tilde{F}(t)$ defined by $F^{2}(t)=\varphi^{*}_{t}(\tilde{F}^{2}(t))$ form a solution to the following Ricci-DeTurck flow
\begin{equation} \label{56}
\frac{\partial}{\partial t}\tilde{F}^{2}(t)=-2\tilde{F}^{2}(t)\mathcal{R}ic(\tilde{g}(t))-\mathcal{L}_{\xi}\tilde{F}^{2}(t),\nonumber
\end{equation}
where, $\xi=(\Phi_{\tilde{g}(t),h}Id)^{i}\frac{\partial}{\partial x^i}$ and $\tilde{g}(t)$ is the metric tensor related to $\tilde{F}(t)$. Furthermore, for all $z\in SM$ and $t\in[0,T)$ we have
\begin{equation*}
\frac{\partial}{\partial t}\varphi_{t}(z)=\xi(\varphi_{t}(z),t).
\end{equation*}
\end{prop}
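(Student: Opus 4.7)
The plan is to reverse the route of Proposition \ref{main11}: starting from a solution $F(t)$ of the Ricci flow (\ref{20}) and the diffeomorphisms $\varphi_t$ produced by the harmonic-map-like flow $\partial_t\varphi_t = \Phi_{g(t),h}\varphi_t$, define $\tilde F(t)$ by pulling forward via $\varphi_t^{-1}$, equivalently by requiring $F^2(t) = \varphi_t^*\tilde F^2(t)$, and verify the Ricci-DeTurck equation (\ref{22}). The argument splits into two steps: first, identifying the infinitesimal generator of the flow $\varphi_t$ with $\xi\circ\varphi_t$, and second, differentiating the pullback identity in $t$.

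For the first step, I would invoke the diffeomorphism invariance Lemma \ref{main2} with $\psi := \varphi_t^{-1}$, $\varphi := \varphi_t$, source metric $g(t)$, and background metric $h$. Then $\psi^*g(t) = (\varphi_t^{-1})^*g(t) = \tilde g(t)$, and $\psi^*\varphi = \varphi_t\circ\varphi_t^{-1} = \mathrm{Id}$. Writing $\tilde z := \varphi_t(z)$, the lemma reads
\[
(\Phi_{\tilde g(t),h}\mathrm{Id})^\alpha\big|_{\varphi_t(z)} = (\Phi_{g(t),h}\varphi_t)^\alpha\big|_{z} = \frac{\partial \varphi_t^\alpha}{\partial t}(z).
\]
For $\alpha = i$, by Corollary \ref{main3} the right-hand side is exactly $\xi^i(\varphi_t(z), t)$; for $\alpha = 2+i$, a direct calculation using $\mathrm{Id}^{2+i}(x,y) = y^i$ shows that $(\Phi_{\tilde g,h}\mathrm{Id})^{2+i}$ is an antisymmetric contraction against the symmetric $\tilde g^{pq}$ and therefore vanishes, consistent with $\xi$ having only horizontal components. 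This proves the final assertion of the proposition, $\partial_t\varphi_t(z) = \xi(\varphi_t(z), t)$.

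For the second step, I would differentiate $F^2(t) = \varphi_t^*\tilde F^2(t)$ in $t$. Since $\varphi_t$ is the flow of the time-dependent vector field $\xi(t)$ by Step 1, the standard formula for the time derivative of the pullback of a time-dependent tensor yields
\[
\frac{\partial F^2(t)}{\partial t} = \varphi_t^*\!\left(\frac{\partial \tilde F^2(t)}{\partial t} + \mathcal L_{\xi(t)}\tilde F^2(t)\right).
\]
Using the Ricci flow equation (\ref{20}) on the left, so that $\partial_t F^2 = -2F^2\,\mathcal{R}ic(g)$, and then applying $(\varphi_t^{-1})^*$ gives
\[
\frac{\partial \tilde F^2(t)}{\partial t} + \mathcal L_{\xi(t)}\tilde F^2(t) = -2(\varphi_t^{-1})^*\bigl(F^2\,\mathcal{R}ic(g)\bigr) = -2\tilde F^2(t)\,\mathcal{R}ic(\tilde g(t)),
\]
where the last equality uses $(\varphi_t^{-1})^*F^2 = \tilde F^2$ together with the naturality of the Ricci scalar, $(\varphi_t^{-1})^*\mathcal{R}ic(g) = \mathcal{R}ic((\varphi_t^{-1})^*g) = \mathcal{R}ic(\tilde g)$, guaranteed by Lemma \ref{lemmohem}. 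Rearranging yields exactly the Ricci-DeTurck equation (\ref{22}).

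The main obstacle I anticipate is the bookkeeping through the diffeomorphism invariance lemma, specifically correctly tracking the point of evaluation and the identification of $\psi^*\varphi$ with the identity when $\psi = \varphi_t^{-1}$. A secondary concern is that the time interval on which $\varphi_t$ exists and remains a diffeomorphism must cover $[0,T)$; since the flow $\partial_t\varphi_t = \Phi_{g(t),h}\varphi_t$ is a semi-linear strictly parabolic system on $SM$ by Remark \ref{main3+1} with initial data $\varphi_0 = \mathrm{Id}$, short-time existence is automatic, and compactness of $SM$ together with continuity ensures the diffeomorphism property is preserved on the subinterval under consideration.
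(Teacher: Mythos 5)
Your proposal is correct and follows essentially the same route as the paper: Step 1 uses the diffeomorphism invariance of $\Phi$ (Lemma \ref{main2}, applied with $\psi=\varphi_t^{-1}$ rather than the paper's $\psi=\varphi_t$, which is an equivalent substitution) to identify $\partial_t\varphi_t$ with $\xi\circ\varphi_t$, and Step 2 differentiates $F^2(t)=\varphi_t^*\tilde F^2(t)$, invokes the Ricci flow equation and the naturality of the Ricci scalar (Lemma \ref{lemmohem}), and applies $(\varphi_t^{-1})^*$ to obtain the Ricci--DeTurck equation. The only cosmetic differences are that you work with $F^2$ instead of $\log F$ and that you explicitly check the vanishing of the $\alpha=2+i$ components (which follows from a direct cancellation in (\ref{15+1+1}) for $\varphi^{2+i}=y^i$, not from an antisymmetry argument), a point the paper leaves implicit.
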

\begin{proof}
Using Lemma \ref{main2} we have
\begin{align*}
\frac{\partial}{\partial t}\varphi_{t}&=\Phi_{g(t),h}\varphi_{t}=\Phi_{\varphi^{*}_{t}
(\tilde{g}(t)),h}\varphi_{t}=\Phi_{\varphi^{*}_{t}
(\tilde{g}(t)),h} Id \circ\varphi_{t}\\
&=\Phi_{\varphi^{*}_{t}
(\tilde{g}(t)),h}\varphi_{t}^{*} Id=\Phi_{\tilde{g}(t),h}Id=\xi,
\end{align*}
for all $z\in SM$ and $t\in[0,T)$. Using $F^{2}(t)=\varphi^{*}_{t}(\tilde{F}^{2}(t))$ leads to
\begin{align}\label{59+1}
\frac{\partial}{\partial t}(\log F(t))&=
\frac{1}{2}\frac{\frac{\partial}{\partial t}(\varphi^{*}_{t}(\tilde{F}^{2}(t)))}{\varphi^{*}_{t}(\tilde{F}^{2}(t))}\nonumber\\
&=\frac{1}{2}\frac{\varphi^{*}_{t}(\frac{\partial}{\partial t}\tilde{F}^{2}(t))+\mathcal{L}_{(\varphi^{-1}_{t})_{*}(\xi)}
\varphi^{*}_{t}(\tilde{F}^{2}(t))}{\varphi^{*}_{t}(\tilde{F}^{2}(t))}\nonumber\\
&=\frac{1}{2}\frac{\varphi^{*}_{t}(\frac{\partial}{\partial t}\tilde{F}^{2}(t)+\mathcal{L}_{\xi}\tilde{F}^{2}(t))}{\varphi^{*}_{t}(\tilde{F}^{2}(t))}.
\end{align}
By assumption, $F(t)$ form a solution to the Finslerian Ricci flow (\ref{20})
\begin{equation} \label{60}
0=\frac{\partial}{\partial t}(\log F(t))+\mathcal{R}ic_{F(t)}.
\end{equation}
Thus by means of (\ref{59+1}), (\ref{60}) and Lemma \ref{lemmohem} we have
\begin{align*}
0&=\frac{\varphi^{*}_{t}(\frac{\partial}{\partial t}\tilde{F}^{2}(t)+\mathcal{L}_{\xi}\tilde{F}^{2}(t))}{\varphi^{*}_{t}(\tilde{F}^{2}(t))}+2\mathcal{R}ic_{F(t)}\nonumber\\
&=\frac{\varphi^{*}_{t}(\frac{\partial}{\partial t}\tilde{F}^{2}(t)+\mathcal{L}_{\xi}\tilde{F}^{2}(t))}{\varphi^{*}_{t}(\tilde{F}^{2}(t))}
+2\mathcal{R}ic_{\varphi^{*}_{t}(\tilde{F}(t))}\nonumber\\
&=\frac{\varphi^{*}_{t}(\frac{\partial}{\partial t}\tilde{F}^{2}(t)+\mathcal{L}_{\xi}\tilde{F}^{2}(t))}{\varphi^{*}_{t}(\tilde{F}^{2}(t))}
+2\varphi^{*}_{t}(\mathcal{R}ic_{\tilde{F}(t)})\nonumber\\
&=\frac{\varphi^{*}_{t}(\frac{\partial}{\partial t}\tilde{F}^{2}(t)+\mathcal{L}_{\xi}\tilde{F}^{2}(t))+2
\varphi^{*}_{t}(\tilde{F}^{2}(t))\varphi^{*}_{t}(\mathcal{R}ic_{\tilde{F}(t)})}{\varphi^{*}_{t}
(\tilde{F}^{2}(t))}\nonumber\\
&=\frac{\varphi^{*}_{t}(\frac{\partial}{\partial t}\tilde{F}^{2}(t)+\mathcal{L}_{\xi}\tilde{F}^{2}(t)+2
\tilde{F}^{2}(t)\mathcal{R}ic_{\tilde{F}(t)})}{\varphi^{*}_{t}
(\tilde{F}^{2}(t))}\nonumber.
\end{align*}
Therefore,
$
\varphi^{*}_{t}(\frac{\partial}{\partial t}\tilde{F}^{2}(t)+\mathcal{L}_{\xi}\tilde{F}^{2}(t)+2
\tilde{F}^{2}(t)\mathcal{R}ic_{\tilde{F}(t)})=0.\nonumber
$
This implies
\begin{equation}
\frac{\partial}{\partial t}\tilde{F}^{2}(t)=-2
\tilde{F}^{2}(t)\mathcal{R}ic_{\tilde{F}(t)}-\mathcal{L}_{\xi}\tilde{F}^{2}(t).\nonumber
\end{equation}
Therefore, $\tilde{F}(t)$ is a solution to the Ricci-DeTurck flow, as we have claimed.
\end{proof}
{\bf Proof of Theorem \ref{main14}.} In order to check the existence statement, recall that by means of Theorem \ref{main8}, there exists a solution $\tilde{F}(t)$ to the Finslerian Ricci-DeTurck flow (\ref{22}) which is defined on some time interval $[0,T)$ and satisfies $\tilde{F}(0)=F_{0}$. Let $\varphi_{t}$ be the solution of the ODE
\begin{eqnarray*}
\frac{\partial}{\partial t}\varphi_{t}(z)=(\Phi_{\tilde{g}(t),h}Id)(\varphi_{t}(z),t)=\xi(\varphi_{t}(z),t),\nonumber
\end{eqnarray*}
with the initial condition $\varphi_{0}(z)=z$, for $z\in SM$ and $t\in[0,T)$. By Proposition \ref{main11}, the Finsler structures $F^{2}(t)=\varphi^{*}_{t}(\tilde{F}^{2}(t))$ form a solution to the Finslerian Ricci flow (\ref{20}) with $F(0)=F_{0}$. This completes the existence statement.

For uniqueness statement assume that $F_{1}(t)$ and $F_{2}(t)$ are both solutions to the Finslerian Ricci flow defined on some time interval $[0,T)$ and satisfy $F_{1}(0)=F_{2}(0)$. We claim $F_{1}(t)=F_{2}(t)$ for all $t\in[0,T)$. In order to prove this fact, we argue by contradiction. Suppose that $F_{1}(t)\neq F_{2}(t)$ for some $t\in[0,T)$. Let's consider a real number $\tau\in[0,T)$ where $\tau=\inf\{t\in[0,T):F_{1}(t)\neq F_{2}(t)\}$. Clearly, $F_{1}(\tau)=F_{2}(\tau)$. Let $\varphi^{1}_{t}$ be a solution of the flow
\begin{equation} \label{67}
\frac{\partial}{\partial t}\varphi^{1}_{t}=\Phi_{g_{1}(t),h}\varphi^{1}_{t},\nonumber
\end{equation}
with initial condition $\varphi^{1}_{\tau}=Id$ and $\varphi^{2}_{t}$ a solution of the flow
\begin{equation}
\frac{\partial}{\partial t}\varphi^{2}_{t}=\Phi_{g_{2}(t),h}\varphi^{2}_{t},\nonumber
\end{equation}
with initial condition $\varphi^{2}_{\tau}=Id$. It follows from the standard theory of parabolic differential equations that $\varphi^{1}_{t}$ and $\varphi^{2}_{t}$ are defined on some time interval $[\tau,\tau+\epsilon)$, where, $\epsilon$ is a positive real number. Moreover, if we choose $\epsilon>0$ small enough, then $\varphi^{1}_{t}$ and $\varphi^{2}_{t}$ are diffeomorphisms for all $t\in[\tau,\tau+\epsilon)$. For each $t\in[\tau,\tau+\epsilon)$ we define two Finsler structures $\tilde{F}_{1}(t)$ and $\tilde{F}_{2}(t)$ by $(F_{1}(t))^{2}=(\varphi^{1}_{t})^{*}(\tilde{F}_{1}(t))^{2}$ and $(F_{2}(t))^{2}=(\varphi^{2}_{t})^{*}(\tilde{F}_{2}(t))^{2}$. It follows from Proposition \ref{main12} that $\tilde{F}_{1}(t)$ and $\tilde{F}_{2}(t)$ are solutions of the Finslerian Ricci-DeTurck flow. Since $\tilde{F}_{1}(\tau)=\tilde{F}_{2}(\tau)$, the uniqueness statement in Theorem \ref{main8} implies that $\tilde{F}_{1}(t)=\tilde{F}_{2}(t)$ for all $t\in[\tau,\tau+\epsilon)$. For each $t\in[\tau,\tau+\epsilon)$, we define a vector field $\xi$ on $SM$ by
\begin{equation}
\xi=\Phi_{\tilde{g}_{1}(t),h}Id=\Phi_{\tilde{g}_{2}(t),h}Id.\nonumber
\end{equation}
By Proposition \ref{main12}, we have
\begin{eqnarray*}
\frac{\partial}{\partial t}\varphi^{1}_{t}(z)=\xi(\varphi^{1}_{t}(z),t),\nonumber
\end{eqnarray*}
and
\begin{eqnarray*}
\frac{\partial}{\partial t}\varphi^{2}_{t}(z)=\xi(\varphi^{2}_{t}(z),t),\nonumber
\end{eqnarray*}
for $z\in SM$ and $t\in[\tau,\tau+\epsilon)$. Since $\varphi^{1}_{\tau}=\varphi^{2}_{\tau}=Id$, it follows that $\varphi^{1}_{t}=\varphi^{2}_{t}$ for all $t\in[\tau,\tau+\epsilon)$.  Putting these facts together, we conclude that
\begin{equation*}
(F_{1}(t))^{2}=(\varphi^{1}_{t})^{*}(\tilde{F}_{1}(t))^{2}=(\varphi^{2}_{t})^{*}(\tilde{F}_{2}(t))^{2}=
(F_{2}(t))^{2},
\end{equation*}
for all $t\in[\tau,\tau+\epsilon)$. Therefore, $F_{1}(t)=F_{2}(t)$ for all $t\in[\tau,\tau+\epsilon)$. This contradicts the definition of $\tau$. Thus the uniqueness holds well. This completes the proof of Theorem \ref{main14}.\hspace{\stretch{1}}$\Box$
\begin{ex}
Let $(M,F_{0})$ be a compact Finsler surface.
We are going to obtain a solution to the Ricci flow \eqref{20}.
It's well known in dimension two, a Finsler metric is of isotropic Ricci scalar (or Einstein)  if and only if it is of isotropic flag curvature.
  Therefore, $F_{0}$ is an Einstein metric and we have $\mathcal{R}ic_{F_{0}}=K$ where, $K=K(x)$ is a scalar function on $M$. Consider a family of scalars $\tau(t)$ defined by
\begin{equation*}
\tau(t):=1-2Kt>0.
\end{equation*}
Define a smooth one-parameter family of Finsler structures on $M$ by
\begin{equation*}
F^{2}(t):=\tau(t)F_{0}^2.
\end{equation*}
Thus we have
\begin{equation*}
\log(F(t))=\frac{1}{2}\log(\tau(t)F_{0}^2).
\end{equation*}
Derivative with respect to $t$ yields
\begin{equation}\label{exp1}
\frac{\partial}{\partial t}\log(F(t))=-\frac{K}{\tau(t)}=-\frac{\mathcal{R}ic_{F_{0}}}{\tau(t)}.
\end{equation}
On the other hand, by straight forward computations we have $\frac{1}{\tau(t)}\mathcal{R}ic_{F_{0}}=\mathcal{R}ic_{\tau(t)^{\frac{1}{2}}F_{0}}$, for more details see \cite[p.\ 926]{BY}. Replacing the last relation in (\ref{exp1}) leads to
\begin{equation*}
\frac{\partial}{\partial t}\log(F(t))=-\mathcal{R}ic_{\tau(t)^{\frac{1}{2}}F_{0}}=-\mathcal{R}ic_{{F(t)}}.
\end{equation*}
Hence, $F(t)$ is a solution to the Ricci flow equation \eqref{20}.
\end{ex}
\begin{ex}
Let $F(t)$ be a family of Finsler structures on the sphere $\mathbb{S}^2$ defined by $ F^2(t)=a_{ij}(t)y^iy^j$ where, $a_{ij}(t)$ is a well known Riemannian metric on $\R^2$, called  the Rosenau metric
$$ a_{ij}(t)=\frac{8\sinh (-t)}{1+2 \cosh (-t) |x|^2+|x|^4}\delta_{ij} , \quad t\in (-\infty , 0) , x\in \R^2. $$
It is well known that $a_{ij}$ extends to a metric on $\mathbb{S}^2$. The related Finsler metric tensor of $F(t)$ is
\begin{equation*}
g_{ij}(t):=(\frac{1}{2}F^{2})_{y^iy^j}=a_{ij}(t).
\end{equation*}
By straight forward computations, $R(a(t))$ the scalar curvature of the Riemannian metric $a_{ij}(t)$ is
\begin{equation*}
R(a(t))=\frac{\cosh(-t)}{\sinh(-t)}-\frac{2\sinh(-t)|x|^2}{1+2\cosh(-t)|x|^2+|x|^4}.
\end{equation*}
The Ricci tensor of $g_{ij}$ and $a_{ij}$ coincides. Hence
\begin{equation*}
Ric_{ij}(g(t))=Ric_{ij}(a(t))=\frac{1}{2}R(a(t))a_{ij}(t).
\end{equation*}
 From $F(t)=(a_{ij}(t)y^iy^j)^{\frac{1}{2}}$, we have
\begin{equation*}
\log(F(t))=\frac{1}{2}\log(a_{ij}(t)y^iy^j).
\end{equation*}
Derivative with respect to $t$ leads to
\begin{equation}\label{exp}
\frac{\partial}{\partial t}\log F(t)=\frac{1}{2}\frac{\partial}{\partial t}(a_{ij}(t))l^il^j,
\end{equation}
where,
\begin{equation*}
\frac{\partial}{\partial t}(a_{ij}(t))={\Big (}\frac{-8\cosh (-t)}{1+2 \cosh (-t) |x|^2+|x|^4}+\frac{16\sinh ^2(-t)|x|^2}{(1+2 \cosh (-t) |x|^2+|x|^4)^2}{\Big )}\delta_{ij}.
\end{equation*}
On the other hand, we have
\begin{align*}
\mathcal{R}ic(g(t))&=l^il^jRic_{ij}(g(t))=\frac{1}{2}l^il^jR(a(t))a_{ij}(t)\\
&=\frac{1}{2}l^il^j{\Big (}\frac{8\cosh (-t)}{1+2 \cosh (-t) |x|^2+|x|^4}-\frac{16\sinh ^2(-t)|x|^2}{(1+2 \cosh (-t) |x|^2+|x|^4)^2}{\Big )}\delta_{ij}.
\end{align*}
Comparing the last equation and (\ref{exp}) we have
\begin{equation*}
\frac{\partial}{\partial t}\log F(t)=-\mathcal{R}ic(g(t)).
\end{equation*}
Consequently, F(t) form a solution to the Finsler  Ricci flow \eqref{20} on $\mathbb{S}^2$.
\end{ex}
{\bf Acknowledgment}\\
The authors would like to thank Prof. David Bao for his valuable comments.
This work is partially supported by Iran National Science Foundation (INSF), under the grant 95002579.
 
{\small   Behroz Bidabad, bidabad@aut.ac.ir}\\
{\small  Maral Khadem Sedaghat,
m\_sedaghat@aut.ac.ir \\
 Faculty of Mathematics and Computer Science, \\ Amirkabir University of Technology (Tehran Polytechnic),\\ Hafez Ave., 15914 Tehran, Iran.}

\end{document}